\newtheorem{theorem}{Theorem}[section]
\newtheorem{definitio}[theorem]{Definition}
\newenvironment{definition}{\begin{definitio} \rm }{\end{definitio}}
\newtheorem{lemma}[theorem]{Lemma}
\newtheorem{proposition}[theorem]{Proposition}
\newtheorem{corollary}[theorem]{Corollary}
\newtheorem*{theo}{Theorem}
\newtheorem*{prop}{Proposition}
\theoremstyle{remark}
\newtheorem*{remark}{Remark}
\newtheorem*{example}{Example}
\def\fl#1{\smash{\mathop{\hbox to 11mm{ \belowarrowfill\ }}\limits^{\scriptstyle{#1}}}}
\newcommand{\Z}{\mathbb{Z}}
\newcommand{\Q}{\mathbb{Q}}
\newcommand{\R}{\mathbb{R}}
\newcommand{\Zt}{\mathbb{Z}[t^{\pm1}]}
\newcommand{\Qt}{\mathbb{Q}[t^{\pm1}]}
\newcommand{\Lk}{\textrm{\textnormal{Lk}}}
\newcommand{\lk}{\textrm{\textnormal{lk}}}
\newcommand{\Al}{\mathfrak{A}}
\newcommand{\mir}[1]{\overline{#1}}
\newcommand{\Int}{\mathrm{Int}}
\newcommand{\Aut}{\mathrm{Aut}}
\newcommand{\Brib}{\textrm{\textit{\textbaht}}}
\newcommand{\Bpre}{\mathcal{B}}
\newcommand{\Drib}{\textrm{\textit{\DJ{}}}}
\newcommand{\cc}{\textrm{\textit{\textcentoldstyle}}}
\newcommand{\ccc}{\overline{\textrm{\textit{\textcentoldstyle}}}}
\newcommand{\tr}[1]{\hspace{-0.5ex}\ ^t\hspace{-0.25ex}#1}
\title[A Fox--Milnor theorem for the Alexander polynomial of $2$--knots]{A Fox--Milnor theorem for the Alexander polynomial of knotted $2$--spheres in $S^4$.}
\author{Delphine Moussard}
\address{Universit\'e{}~de Bourgogne Franche--Comt\'e, IMB, UMR 5584, 21000 Dijon, France } 
\email{delphine.moussard@u-bourgogne.fr}
\author{Emmanuel Wagner}
\address{Universit\'e{}~de Bourgogne Franche--Comt\'e, IMB, UMR 5584, 21000 Dijon, France } 
\email{emmanuel.wagner@u-bourgogne.fr}
\begin{document}

\begin{abstract}
For knots in $S^3$, it is well-known that the Alexander polynomial of a ribbon knot factorizes as $f(t)f(t^{-1})$ for some polynomial 
$f(t)$. By contrast, the Alexander polynomial of a ribbon $2$--knot is not even symmetric in general. Via an alternative notion of ribbon $2$--knots, 
we give a topological condition on a $2$--knot that implies the factorization of the Alexander polynomial. 
\end{abstract}

\maketitle

\section{Introduction}

The class of ribbon embeddings turns out to play a crucial role in low-dimensional topology. They first appeared in the work of Fox \cite{Fox} 
who investigated the problem of determining when a knot in the $3$--sphere bounds a smooth disk in the $4$--ball. Such a knot is called a slice knot. 
It is the case in particular when the knot bounds an immersed disk in the $3$--sphere with specific self-intersections, namely ribbon singularities, 
see Figure~\ref{figribbon}.
\newcommand{\hor}[1]{\draw[color=black!0,pattern = horizontal lines] #1;}
\newcommand{\neup}[1]{\draw[color=white,fill=white] #1; \draw[pattern = north east lines] #1;}
\newcommand{\horup}[1]{\draw[color=white,fill=white] #1; \draw[color=black!0,pattern = horizontal lines] #1;}
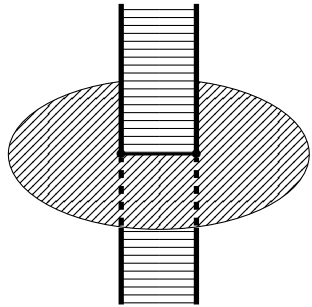
\begin{figure}[htb] 
\begin{center}
\begin{tikzpicture}
  \hor{(0.5,-2) -- (0.5,0) -- (-0.5,0) -- (-0.5,-2)}
  \neup{(0,0) circle (2 and 1)}
  \horup{(-0.5,0) -- (-0.5,2) -- (0.5,2) -- (0.5,0)}
  \foreach \x in {-0.5,0.5} {
  \draw[line width=2pt] (\x,0) -- (\x,2);
  \draw[line width=2pt] (\x,-0.98) -- (\x,-2);
  \draw[line width=2pt,dashed] (\x,-0.95) -- (\x,-0.05);}
  \draw[line width=1pt] (-0.5,0) node {$\scriptstyle{\bullet}$} -- (0.5,0) node {$\scriptstyle{\bullet}$};
\end{tikzpicture}
\end{center}
\caption{A ribbon singularity.} \label{figribbon}
\end{figure}
Such a knot is called ribbon. One easily sees that a ribbon disk can be pushed into the $4$--ball to produce a smooth disk, hence ribbon knots are slice. 
In \cite{Fox}, Fox asked whether the converse is true, {\em i.e.} if any slice knot is ribbon. 
This is nowadays known as the Slice-Ribbon conjecture. To determine whether a knot is ribbon (or slice) is a difficult task. 
One of the most famous obstruction is provided by the Fox-Milnor theorem on the Alexander polynomial \cite{FoxMilnor}: for a slice knot $K$, 
the Alexander polynomial $\Delta_K(t)\in\Zt$ can be written as $f(t)f(t^{-1})$ for some $f(t)\in\Zt$. 
This is what we will call here the factorization property. It emphasizes that some topology is reflected in this algebraic invariant.

For $2$--knots, {\em i.e.} embeddings of a $2$--sphere $S^2$ into the $4$--sphere $S^4$, one can still define an Alexander polynomial, 
but it was proven by Kinoshita \cite{Kin} that the Alexander polynomial of a ribbon $2$--knot ---a $2$-knot that bounds an immersed $3$--ball with only 
ribbon disk singularities--- can be any polynomial $f(t)\in\Zt$ such that $f(1)=1$. In \cite{NN}, Nakanishi and Nishizawa gave a topological condition 
on a $2$--knot ensuring that its Alexander polynomial is symmetric, a property satisfied by the Alexander polynomial of any knot in $S^3$. 
In this paper, we investigate the topological properties of $2$--knots that imply the factorization property of their Alexander polynomial. 
This leads us to introduce an alternative notion of ribbon $2$--knots which at the same time encompasses the usual notion of ribbon $2$--knots and 
is conveniently featured to recover for some subclasses the factorization property of the Alexander polynomial.

For classical knots, the ribbon singularities of an immersed disk bounded by the knot are necessarily $1$--disks ---the only compact connected 
$1$-manifolds with non-empty boundary. For $2$--knots, there are much more possibilities. Roseman proposed a general definition of ribbon $2$--knots 
with no condition on the topological type of the ribbon singularities (see Hitt \cite{Hitphd}). 
Here we focus on the case of ribbon singularities of annular type; we call $A$--ribbon a $2$--knot that bounds an immersed $3$--ball 
with only ribbon singularities of annular type. Beyond the fact that it is the next easiest possibility after $2$-disks, they appear naturally 
via Artin's spinning construction. This construction produces a $2$--knot from a classical knot, rotating it around an $\R^2$--axis. 
It has the property to preserve the Alexander polynomial, hence the spuns of ribbon knots are $2$--knots whose Alexander polynomial 
has the factorization property. It is easily seen that spinning the initial immersed $2$--disk produces an immersed $3$--ball whose 
self-intersections are annular ribbon singularities. The factorization property naturally arises for another class of $2$--knots, namely the connected sum 
of a $2$-knot with its mirror image. When the initial $2$--knot is ribbon, in the usual sense, there is a natural construction of an immersed 
$3$--ball bounded by this connected sum which has only annular ribbon singularities.

\begin{figure}
\begin{center}
\scalebox{0.7}{\input{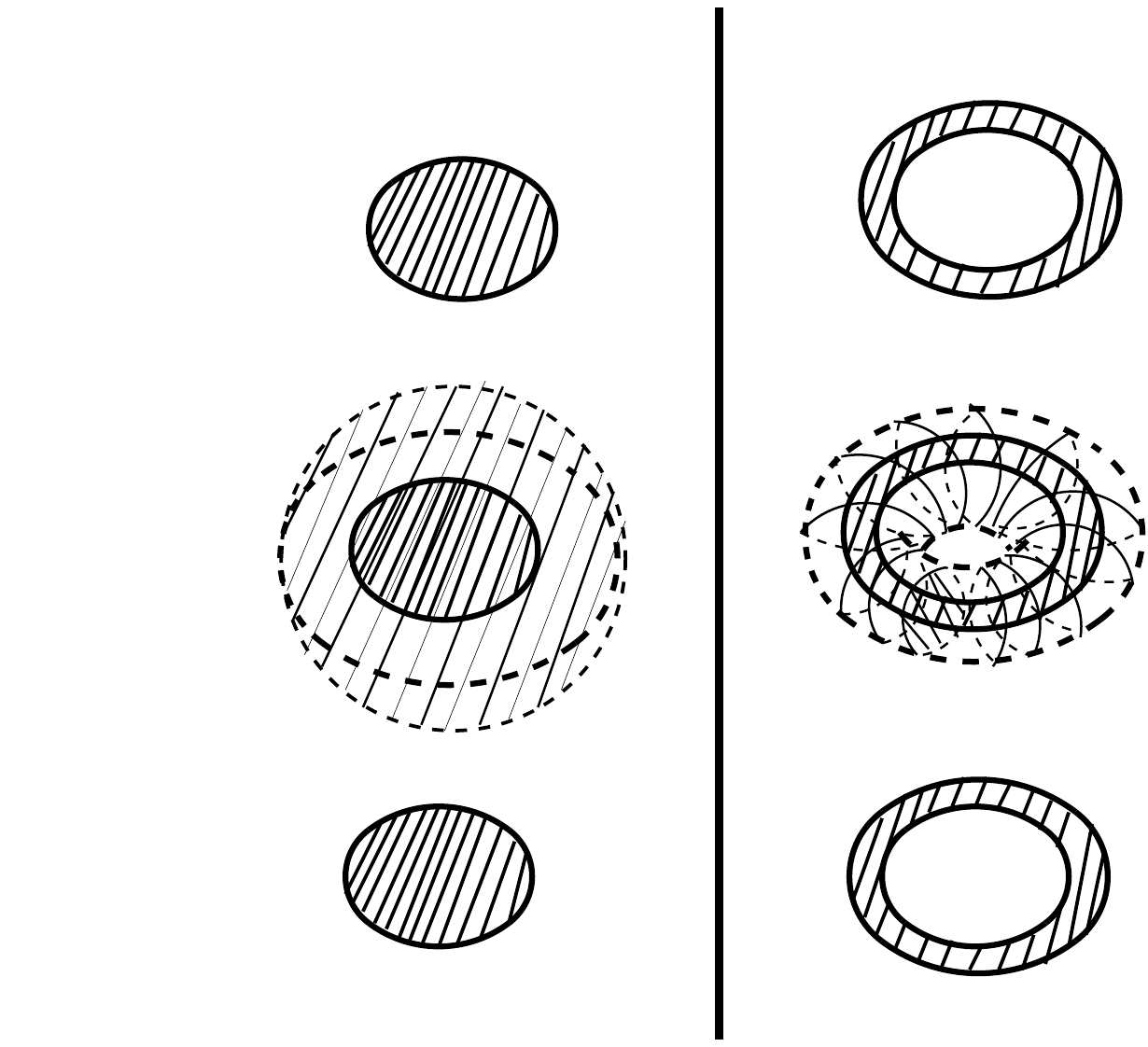_t}}
\caption{Local models.} \label{figlocalmodels}
\end{center}
\end{figure}

This notion of an $A$--ribbon $2$--knot appears to generalize the usual notion of a ribbon $2$--knot.
\begin{prop}[Proposition \ref{propribbonAribbon}]
Ribbon $2$--knots are $A$--ribbon.
\end{prop}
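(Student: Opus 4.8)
The plan is to make the statement local around each ribbon disk singularity and then perform an explicit push-through move, whose effect is read off from the local models of Figure~\ref{figlocalmodels}. Set things up as follows: let $K$ be a ribbon $2$--knot and let $f\colon B^3\to S^4$ be an immersion with $f(\partial B^3)=K$ whose singular set is a disjoint union of ribbon disk singularities $\delta_1,\dots,\delta_n$; for each $i$ the preimage $f^{-1}(\delta_i)$ is a disjoint union of two disks $D_i'\sqcup D_i''$, with $D_i'$ properly embedded in $B^3$ (so $\partial D_i'\subset\partial B^3$) and $D_i''$ embedded in $\Int(B^3)$. Pick pairwise disjoint closed $4$--ball neighborhoods $N_i$ of the $\delta_i$, small enough to be disjoint from $K$, to meet $f(B^3)$ only in the two transverse $3$--dimensional sheets $P_i=f(\nu(D_i'))$ and $Q_i=f(\nu(D_i''))$ through $\delta_i$, and with $\nu(D_i'')\subset\Int(B^3)$. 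Since the $N_i$ are disjoint and avoid $K$, it suffices to modify $f$ separately inside each $\nu(D_i'')$, fixing $\partial B^3$ and fixing the frontier of $\nu(D_i'')$ along which $Q_i$ reattaches to $f(B^3\setminus\nu(D_i''))$, in such a way that $\delta_i$ becomes a ribbon annular singularity and no new self-intersection is created.

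For the local move, identify $N_i$ with the standard model $D^4$ carrying two transverse slabs $P_i=\{x_4=0,\ |x_3|\le\varepsilon\}$ and $Q_i=\{x_3=0,\ |x_4|\le\varepsilon\}$ meeting in the disk $\delta_i=\{x_3=x_4=0,\ x_1^2+x_2^2\le1\}$, where $x_4$ and $x_3$ are the thickening directions of $\nu(D_i'')$ and $\nu(D_i')$ respectively. Using a cutoff function supported away from $\{x_1^2+x_2^2=1\}$ and from $\{|x_4|=\varepsilon\}$, isotope $Q_i$ rel frontier by pushing the part lying over the central subdisk $\{x_1^2+x_2^2\le r_0\}$ into the region $\{x_3>\varepsilon\}$; concretely, replace $Q_i$ by the graph $\{x_3=h(x_1^2+x_2^2,x_4)\}$ where $h$ equals $0$ near the frontier and exceeds $\varepsilon$ over the center. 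As $\{x_3>\varepsilon\}\cap N_i$ meets no other part of $f(B^3)$, the new sheet $Q_i'$ is again embedded, the modified map is again an immersion of $B^3$ agreeing with $f$ on $\partial B^3$, and (after a small perturbation making $Q_i'$ transverse to $P_i$) the double point set inside $N_i$ is now the annulus $A_i=Q_i'\cap P_i=\{x_4=0,\ r^\ast\le x_1^2+x_2^2\le1\}$. Its two preimages are annuli: $f^{-1}(A_i)\cap\nu(D_i')$ has one boundary circle on $\partial D_i'\subset\partial B^3$ and one in $\Int(B^3)$, while $f^{-1}(A_i)\cap\nu(D_i'')$ lies in $\Int(B^3)$. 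This is exactly the transition between the local models of Figure~\ref{figlocalmodels}, so $\delta_i$ has been replaced by a ribbon annular singularity.

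Performing this move simultaneously in all the $N_i$ produces an immersed $3$--ball bounded by $K$ with only annular ribbon singularities, so $K$ is $A$--ribbon. The step that is more than bookkeeping is the verification that the pushed-through configuration is a ribbon singularity of annular type in the precise sense fixed by Figure~\ref{figlocalmodels}: one must identify the two preimage annuli together with their boundary behaviour on $\partial B^3$, and check that the push-through can be confined to a collar of $P_i$ containing no other sheet, so that no extra self-intersection of $f$ appears, while the frontier of $\nu(D_i'')$ stays fixed so that $Q_i'$ still glues correctly to the unchanged remainder of $f(B^3)$.
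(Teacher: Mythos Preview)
Your push-through move does create an annular double-point set, but the result is \emph{not} an $A$--ribbon singularity in the paper's sense. The definition requires the boundary pre-singularity $R^\partial$ to be \emph{properly} embedded in $\Bpre$, meaning $R^\partial\cap\partial\Bpre=\partial R^\partial$; for an annulus this forces \emph{both} boundary circles onto $\partial\Bpre$. In your construction you modify $f$ only on $\nu(D_i'')\subset\Int(B^3)$, so the domain $B^3$ and its boundary are unchanged. You yourself record that the preimage annulus $f^{-1}(A_i)\cap\nu(D_i')$ has one boundary circle on $\partial D_i'\subset\partial B^3$ and the other in $\Int(B^3)$; that is precisely the failure of proper embedding, not the desired conclusion. (There is also a smaller slip in the setup: since $\partial D_i'\subset\partial B^3$, the circle $\partial\delta_i=f(\partial D_i')$ lies on $K$, so no $4$--ball neighborhood of $\delta_i$ can be chosen disjoint from $K$.)

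The paper's argument addresses exactly this issue by modifying the \emph{domain} rather than the map: one removes from $\Bpre$ a regular neighborhood of an arc $\gamma$ running from $\partial\Bpre$ to an interior point of $R^\partial$ (a finger move from the boundary, Figure~\ref{figfingermove}). Drilling this tunnel turns $R^\partial$ into an annulus whose inner boundary lies on the freshly created boundary of $\Bpre$, so both circles of the new $R^\partial$ sit on $\partial\Bpre$ and the annulus is properly embedded as required. To salvage your approach you would need a mechanism that manufactures new boundary on $\Bpre$ near the center of each $D_i'$; pushing the interior leaf alone cannot achieve that.
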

In view of this proposition, the above mentioned result of Kinoshita implies that any polynomial $f(t)\in\Zt$ such that $f(1)=1$ 
is the Alexander polynomial of an A--ribbon $2$--knot. Hence we need to add some condition to recover the factorization property. 
Such a condition is defined in Subsection \ref{subseclink} and called the linkings condition, which concerns the relative positions 
of the preimages of the singularities in the preimage of the immersed $3$--ball. This condition is naturally satisfied by the spuns of 
ribbon knots.

\begin{theo}[Corollary \ref{corfacto}]
The Alexander polynomial of an $A$--ribbon $2$--knot satisfying the linkings condition has the factorization property.
\end{theo}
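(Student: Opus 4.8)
The strategy is to compute the Alexander module directly from the immersed $3$--ball that exhibits the $A$--ribbon structure, and to show that the linkings condition forces the presentation matrix of this module to be congruent to one of the form $V - tV'$ where $V'$ is essentially the transpose of $V$ (up to the usual $t \leftrightarrow t^{-1}$ substitution), after which the Fox--Milnor-type factorization $\Delta(t) \doteq f(t)f(t^{-1})$ follows from a standard determinant argument. First I would set up the combinatorics: an $A$--ribbon $2$--knot $K$ bounds an immersed $3$--ball $B$ whose singular set is a disjoint union of annuli $a_1,\dots,a_n$, each of which has two preimages in the abstract $3$--ball $\widetilde B$ — a ``core'' annulus $\alpha_i$ sitting in the interior of $\widetilde B$ and a ``boundary'' annulus $\alpha_i'$ meeting $\partial\widetilde B = S^2$. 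Cutting $\widetilde B$ along the $\alpha_i$ (or equivalently building the infinite cyclic cover of $S^4 \setminus K$ from these pieces) gives a handle-like decomposition of the Alexander module: one generator per annulus $a_i$, and the relations record how the boundary annulus $\alpha_i'$ of the $j$--th piece links the core annuli $\alpha_i$ in the various lifts. This is where the linkings condition enters — it is precisely the hypothesis that controls these relative linking numbers so that the ``row'' data and ``column'' data of the relation matrix are interchanged by transposition.

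Concretely, I would aim to produce from the immersed $3$--ball a square presentation matrix $M(t) \in M_n(\Zt)$ of the Alexander module of $K$, decompose it as $M(t) = P - tQ$ where $P$ and $Q$ are integer matrices read off from the intersection pattern of the boundary annuli with the core annuli (no $t$), and then show that the linkings condition implies $Q = \tr{P}$ (or $Q$ differs from $\tr P$ by a change of basis realized by a unimodular matrix over $\Z$, which does not affect the determinant up to units). Granting that, $\Delta_K(t) \doteq \det(P - t\,\tr P)$. The matrix $P - t\,\tr{P}$ is, up to multiplying rows/columns by powers of $t$, skew-symmetric-like in the sense needed: one checks $\det(P - t\,\tr P) = (-t)^n \det(\tr P - t^{-1} P) \doteq \det\big((P - t\,\tr P)^{\mathsf T}\big)$ evaluated at $t^{-1}$, and a short lemma (essentially the algebraic core of the classical Fox--Milnor theorem, e.g.\ via the fact that a matrix of the shape $A - tA'$ with $A' = \tr A$ has determinant of the form $\pm t^k f(t) f(t^{-1})$) yields the factorization. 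I would isolate this purely algebraic fact as a preliminary lemma so the geometric part of the argument only needs to deliver a presentation matrix of the stated shape.

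The main obstacle, and the step I expect to be most delicate, is the geometric bookkeeping that produces the presentation matrix in the normal form $P - t\,\tr P$ directly — i.e.\ making rigorous the claim that the linkings condition translates into the transpose relation $Q = \tr P$. Several things have to be handled with care here: the core annulus $\alpha_i$ and the boundary annulus $\alpha_i'$ are not symmetric a priori (one is interior, one touches $S^2$), so one must argue that the relevant intersection/linking pairing between the $i$--th and $j$--th pieces is nonetheless symmetric under $i \leftrightarrow j$ once the $t$--grading is accounted for; one must also ensure the matrix obtained is genuinely square (this uses that each annulus contributes exactly one generator and one relation, which in turn should follow from an Euler-characteristic or Mayer--Vietoris count for the infinite cyclic cover built from $\widetilde B$ and its cutting annuli), and that no spurious torsion or extra summands appear. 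A clean way to organize this is to first treat the spun case as a model computation — where the normal form is forced by the $S^1$--symmetry and reduces to the classical Seifert-matrix computation $V - tV^{\mathsf T}$ for the underlying knot — and then show the general $A$--ribbon-plus-linkings situation produces a matrix with the same formal structure. Once the matrix is in hand, the rest is the short determinant manipulation above, and Corollary~\ref{corfacto} follows.
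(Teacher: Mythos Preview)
There is a genuine gap: the ``short lemma'' you isolate as the algebraic endgame is false. A matrix of the form $P - t\,\tr{P}$ does \emph{not} in general have determinant $\pm t^k f(t)f(t^{-1})$. What your manipulation
\[
\det(P - t\,\tr{P}) = (-t)^n \det(\tr{P} - t^{-1}P)
\]
actually proves is only that the determinant is \emph{symmetric} up to units, i.e.\ $\Delta(t)\doteq\Delta(t^{-1})$, which is strictly weaker than the factorization property. For a concrete counterexample take $P=\begin{pmatrix}1&1\\0&1\end{pmatrix}$: then $\det(P-t\,\tr{P})=1-t+t^2$, the Alexander polynomial of the trefoil, which is famously not of Fox--Milnor type (its value at $t=-1$ is $3$, not a perfect square). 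So even if your construction of an $n\times n$ presentation $P-tQ$ with $Q=\tr{P}$ went through, you would only recover the symmetry result of Nakanishi--Nishizawa, not the factorization.

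The mechanism the paper uses is different and essentially requires size $2n$, not $n$. From the A--ribbon ball one builds a genuine Seifert \emph{hypersurface} $\Sigma$ whose first homology is free of rank $2n$ (each annular singularity contributes two classes: a meridian $x_i$ of the interior pre-singularity and an arc class $y_i$). The Levine presentation $tV_+-V_-$ is therefore $2n\times 2n$, and the key geometric fact --- which holds for \emph{any} A--ribbon ball with trivial linking matrix of interior pre-singularities, before any use of the linkings condition --- is that the Seifert matrices have an $n\times n$ block of zeros:
\[
V_\pm=\begin{pmatrix}0&U_\pm\\W_\pm&*\end{pmatrix}.
\]
It is this half-rank zero block that gives factorization, exactly as in the classical slice case: $\det(tV_+-V_-)=\pm\det(tU_+-U_-)\det(tW_+-W_-)$. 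The role of the linkings condition is then to force $W_\pm=\tr{U_\mp}$, which makes the second factor equal (up to units) to the first evaluated at $t^{-1}$. Your plan collapses the two families $\{x_i\}$ and $\{y_i\}$ into a single set of $n$ generators and thereby loses the zero block, which is where the factorization actually comes from.
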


Alexander invariants can be defined in greater generality for every embedding $K$ of an $n$--sphere $S^n$ into the $(n+2)$--sphere $S^{n+2}$ ---such an embedding is called an $n$--knot. For an $n$--knot, there are $n$ Alexander polynomials, denoted $\Delta^{(k)}_K$ for $k$ from $1$ to $n$. 
Levine proved that they satisfy a remarkable property: $\Delta_K^{(k)}(t)=\Delta_K^{(n+1-k)}(t^{-1})$ for all $k=1,\ldots,n$. 
In particular, for a $2$--knot, ``the'' Alexander polynomial is $\Delta_K(t)=\Delta_K^{(1)}(t)=\Delta_K^{(2)}(t^{-1})$. 
In addition, if $n$ is odd and $K$ bounds a smooth ball $B^{n+1}$ in $B^{n+3}$ (the $n$--knot is still said to be slice), then the ``middle'' Alexander polynomial $\Delta^{\frac{n+1}{2}}(t)$ has the factorization property (Shinohara-Sumners \cite{ShSu}). For the other Alexander polynomials, the question 
arises to know what kind of topological properties would ensure the factorization property, discussed in this paper when $n=2$. A possible generalization 
of annular ribbon singularities is given by ribbon singularities that are products of circles $S^1$ with compact $(n-1)$--manifolds with 
non-empty boundaries. 


\subsection*{Outline of the paper.} In the next section, we give the definition and a characterization of $A$--ribbon $2$--knots and we relate this notion to the usual one. 
In Section \ref{secseifert}, we discuss the construction of Seifert hypersurfaces and the computation of Seifert matrices for $A$--ribbon $2$--knots. 
In the last section, we prove the factorization property for $A$-ribbon $2$--knots under the linkings condition.

\subsection*{Conventions and notations.}
The boundary of an oriented manifold with boundary is oriented with the ``outward normal first'' convention. 
We also use this convention to define the co-orientation of an oriented manifold embedded in another oriented manifold. \\ [0.05cm]
Given an oriented manifold $M$, we denote by $-M$ the same manifold with opposite orientation. \\ [0.05cm]
If $U$ and $V$ are transverse integral chains in a manifold $M$ such that $\dim(U)+\dim(V)=\dim(M)$, define the sign $\sigma_x$ of an intersection point $x\in U\cap V$ in the following way. Construct a basis of the tangent space $T_xM$ of $M$ at $x$ by taking an oriented basis of the normal space $N_xU$ followed by an oriented basis of $N_xV$. Set $\sigma_x=1$ if this basis is an oriented basis of $T_xM$ and $\sigma_x=-1$ otherwise. Now the algebraic intersection number of $U$ and $V$ in $M$ is $\langle U,V\rangle_M=\sum_{x\in U\cap V}\sigma_x$. \\ [0.05cm]
If $M$ and $N$ are submanifolds of dimension $k$ and $\ell$ respectively in an $n$-sphere $S^n$, with $k+\ell=n-1$, the linking number of $M$ and $N$ is $\lk(M,N)=\langle \Sigma,N\rangle_{S^n}$, where $\Sigma$ is a submanifold of $S^n$ such that $\partial \Sigma=M$. \\ [0.05cm]
The homology class of a submanifold $N$ in a manifold is denoted by $[N]$. \\ [0.05cm]
All homology groups are considered with coefficients in $\Z$.

\section{A--ribbon 2--knots and A--fusion 2--knots}

\begin{definition}
 Let $\Bpre\looparrowright\Brib\subset S^4$ be an immersion of a 3--ball $\Bpre$ in $S^4$. A {\em singularity} of $\Brib$ is a connected component 
 of the singular set of $\Brib$. A {\em pre-singularity} is a connected component of the preimage of this singular set. A singularity $R$ of $\Brib$ is {\em G--ribbon} if it contains only double points and its preimage in $\Bpre$ is made of:
 \begin{itemize}
  \item a {\em boundary pre-singularity} denoted $R^\partial$ properly embedded in $\Bpre$, meaning that 
   $R^\partial\cap\partial\Bpre=\partial R^\partial$,
  \item an {\em interior pre-singularity} denoted $R^\circ$ embedded in the interior of $\Bpre$.
 \end{itemize}
 A G--ribbon singularity is {\em ribbon} (resp. {\em A--ribbon}) if it is homeomorphic to a 2--disk (resp. to an annulus). 
\end{definition}

\begin{definition}
 A {\em ribbon ball} (resp. an {\em A--ribbon ball}) is an immersed 3--ball in $S^4$ whose singularities are ribbon (resp. A--ribbon). 
 A 2--knot is {\em ribbon} (resp. {\em A--ribbon}) if it bounds a ribbon ball (resp. an A--ribbon ball).
\end{definition}

\begin{proposition} \label{propribbonAribbon}
 Any ribbon 2--knot is A--ribbon.
\end{proposition}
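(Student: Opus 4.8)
The plan is to start from a ribbon $2$--knot $K$ together with a ribbon ball $\Bpre\looparrowright\Brib$ realizing it, whose singularities are all ribbon, i.e.\ $2$--disks. For each such singularity $R$, its preimage consists of a boundary pre-singularity $R^\partial$ (a properly embedded $2$--disk meeting $\partial\Bpre$ in $\partial R^\partial$) and an interior pre-singularity $R^\circ$ (a $2$--disk in $\Int\Bpre$). The idea is to modify $\Brib$ near each singularity so as to convert every ribbon ($2$--disk) singularity into an A--ribbon (annular) singularity, without creating new singularities and without changing the boundary $2$--knot $K=\partial\Brib$. Since the interior pre-singularity $R^\circ$ is a $2$--disk sitting in the interior of $\Bpre$, I would remove a small open $2$--disk from the \emph{middle} of $R^\circ$ — equivalently, drill out a tiny $3$--ball from $\Bpre$ centered at an interior point of $R^\circ$ — which turns $R^\circ$ into an annulus and turns the image disk $R$ into an annulus as well, at the cost of boring a tunnel through $\Bpre$. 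One then has to re-cap or re-route so that the result is again an immersed $3$--ball, not a $3$--ball with a handle.

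The cleaner way to organize this is probably the reverse: rather than destroying a disk, \emph{build the annulus by hand}. Given the ribbon singularity $R\cong D^2$ with preimages $R^\partial$ and $R^\circ$, push a collar of $R^\partial$ and a collar of $R^\circ$ slightly apart inside $\Bpre$ and attach a thin embedded $1$--handle in $\Bpre$ joining a point of $\partial R^\partial\subset\partial\Bpre$ — no, joining the two sheets along an arc — so that $R^\partial\cup(\text{band})\cup R^\circ$ becomes a single connected surface. Concretely: take an embedded arc $\gamma$ in $\Bpre$ from a point of $R^\partial$ to a point of $R^\circ$, with interior disjoint from $R^\partial\cup R^\circ$ and from all other pre-singularities, and avoiding $\partial\Bpre$ except at... actually $\gamma$ should have both endpoints in the interior data so that the banded surface is an annulus properly embedded in the correct sense. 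Thicken $\gamma$ to a band; then $R^\partial$ and $R^\circ$ get joined into an annulus $\widetilde R$. The corresponding modification of the \emph{image}: the band traces out a finger of $\Bpre$ pushed across itself, creating exactly the image annulus, and one checks using Figure~\ref{figlocalmodels}-type local models that the new singular set is precisely $\widetilde R$ mapped in as an annular G--ribbon singularity, with no extra double points introduced provided $\gamma$ was chosen generically. Doing this for each of the finitely many ribbon singularities, independently and in disjoint neighborhoods, produces an A--ribbon ball with $\partial=K$.

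The technical heart is verifying three things at once for the local move: (i) the ambient image remains an immersed $3$--ball (so the band-move on $\Bpre$ must be an ambient isotopy of the source $3$--ball followed by the finger push, i.e.\ no change of homeomorphism type of $\Bpre$); (ii) the new singular set is a single annulus whose two pre-singularities are respectively properly embedded in $\Bpre$ and embedded in $\Int\Bpre$, as required by the definition of a G--ribbon singularity; and (iii) no new double points are created. I expect (i)–(iii) to be packaged by exhibiting an explicit local model in a $4$--ball neighborhood — the paper's Figure~\ref{figlocalmodels} ("Local models") is presumably exactly this — where one sees the $2$--disk ribbon singularity and shows it sits inside a model containing an annular singularity, or rather that one can enlarge a $2$--disk singularity to an annular one by an ambient modification supported near the singularity. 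So the main obstacle is the careful local analysis: choosing the band/finger so that the boundary pre-singularity stays properly embedded (its boundary must remain on $\partial\Bpre$ and nowhere else) and the interior one stays interior, while checking transversality keeps the double-point set equal to the new annulus and nothing more. Once the local model is nailed down, globalizing is a routine "do it in disjoint balls, finitely many times" argument, and the conclusion that $K$ bounds an A--ribbon ball — hence is A--ribbon — follows immediately.
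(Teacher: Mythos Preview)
Your proposal has a genuine gap in the ``cleaner'' construction you settle on. Joining the two pre-singularity disks $R^\partial$ and $R^\circ$ by a band along an arc $\gamma$ from one to the other does \emph{not} produce an annulus: two disks connected by a single band give a disk (Euler characteristic $1+1-1=1$), not an annulus. More importantly, even if you arranged an annulus this way, it would be a \emph{single} connected component of the preimage of the singular set, whereas the definition of a G--ribbon (and hence A--ribbon) singularity requires \emph{two} preimage components --- one properly embedded boundary pre-singularity and one interior pre-singularity. So the target topology is wrong on both counts, and the accompanying description of the modification of the immersion (``the band traces out a finger of $\Bpre$ pushed across itself'') does not correspond to any well-defined move preserving the immersed-ball structure.

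The paper's argument is close to your \emph{first} discarded idea, but carried out on the correct sheet. Instead of drilling near $R^\circ$ (which, as you noted, would create a handle), take a path $\gamma$ in $\Bpre$ from a point $x\in\partial\Bpre$ to an interior point $y$ of the \emph{boundary} pre-singularity $R^\partial$, with $\gamma\setminus\{y\}$ disjoint from all pre-singularities. A regular neighborhood $N(\gamma)$ is a $3$--ball meeting $\partial\Bpre$ in a disk, so $\Bpre\setminus\Int N(\gamma)$ is still a $3$--ball; since $h$ is injective on $N(\gamma)$, removing $\Int(h(N(\gamma)))$ from $\Brib$ is just an isotopy of $K$. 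This excises a small disk from $R^\partial$ (and hence from the image $R$), turning $R$ into an annulus; its two preimages are now the punctured $R^\partial$ (a properly embedded annulus) and the corresponding sub-annulus of $R^\circ$ (still interior). That is the finger move you were looking for --- the key point you missed is that the tunnel must start on $\partial\Bpre$ and land on $R^\partial$, not connect $R^\partial$ to $R^\circ$.
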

\begin{proof}
 Let $K$ be a ribbon 2--knot bounding a ribbon ball $\Brib$. Let $h:\Bpre\looparrowright\Brib\subset S^4$ be an associated immersion. 
 Let $R$ be a singularity of $\Brib$. Take a path $\gamma$ in $\Bpre$ from $x\in\partial\Bpre$ to $y\in\Int(R^\partial)$ such that $x=\gamma\cap\partial\Bpre$ and $\gamma\setminus y$ is disjoint from all the pre-singularities. 
 Let $N(\gamma)$ be a regular neighborhood of $\gamma$ in $\Bpre$. Restricted to $N(\gamma)$, the immersion $h$ is injective. 
 Hence $h(N(\gamma))$ is a ball in $\Brib$ that meets $\partial B$ along a disk. Removing the interior of $h(N(\gamma))$ from $\Brib$ 
 corresponds to performing an isotopy on the knot $K$ and changes the ribbon singularity $R$ into an A--ribbon singularity. 
 \begin{figure}
 \begin{center}
 \scalebox{0.8}[0.6]{\input{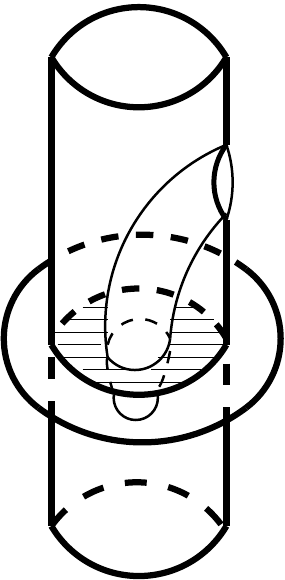_t}}
 \caption{From a ribbon disk to a ribbon annulus.} \label{figfingermove}
 \end{center}
 \end{figure}
 Figure \ref{figfingermove} shows this finger move on the immersed ball $\Brib$ represented by a projection on a 3--dimensional hyperplane: from the local model of the left part of Figure \ref{figlocalmodels}, the 3--ball at $t=0$ has been projected onto a disk and the time direction has become the vertical direction.
\end{proof}

Let us define some notations. Given an annulus $R$ ---for instance a singularity or a pre-singularity, denote by $c(R)$ the core of $R$. 
Let $\Brib$ be an A--ribbon 3--ball and $\Bpre$ a preimage ball of $\Brib$. For a boundary pre-singularity $R^\partial$, define its co-core $\cc(R^\partial)$ as an arc on $R^\partial$ joining its two boundary components and transverse to $c(R^\partial)$. A closure $\ccc(R^\partial)$ of the co-core 
of $R^\partial$ is a knot in $\Bpre$ obtained from $\cc(R^\partial)$ by joining its endpoints with an arc embedded in $\partial\Bpre$. 
Note that the knot type of $\ccc(R^\partial)$ only depends on $R^\partial$. The pre-singularity $R^\partial$ divides $\Bpre$ into a ball denoted $B(R)$ and an integral homology torus denoted $T(R)$, with $R=B(R)\cap T(R)$. Notice that $T(R)$ is the exterior of the knot $\ccc(R^\partial)$. In particular, $T(R)$ is a standard torus if and only if $\ccc(R^\partial)$ is a trivial knot. 

We now introduce the fusion presentation of an A--ribbon 2--knot. 
Let $C_1,\dots,C_{k+1}$ be disjoint handlebodies trivially embedded in a 3--dimensional hyperplane in $S^4$. Let $E_1,\dots,E_k$ be disjoint copies of 
$S^1\times [0,1]\times[0,1]$ embedded in $S^4$ in such a way that:
\begin{itemize}
 \item $S^1\times[0,1]\times\{0\}$ and $S^1\times[0,1]\times\{1\}$ are embedded in the boundaries of the $C_i$'s, 
 \item $S^1\times[0,1]\times(0,1)$ is disjoint from the $\partial C_i$'s and meets transversely the interiors of the $C_i$'s along annuli,
 \item $\Brib=(\sqcup_{i=1}^{k+1}C_i)\cup(\sqcup_{i=1}^kE_i)$ is an immersed ball.
\end{itemize}
Such an immersed ball $\Brib$ is called an {\em A--fusion 3--ball}. It is immediate that the boundary of an A--fusion 3--ball is an A--ribbon 2--knot. 
We now prove the converse.
\begin{proposition}
 Any A--ribbon 2--knot bounds an A--fusion 3--ball. 
\end{proposition}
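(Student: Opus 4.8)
The plan is to mimic the classical argument that a ribbon knot is a band sum of an unlink. Fix an immersion $h\colon\Bpre\looparrowright S^4$ with $h(\Bpre)=\Brib$, let $R_1,\dots,R_k$ be its singularities, and let $R^\partial_j\subset\Bpre$ and $R^\circ_j\subset\Int(\Bpre)$ be the corresponding boundary and interior pre-singularities. First I would choose pairwise disjoint closed regular neighborhoods $U_j$ of the $R^\partial_j$ in $\Bpre$; since each $R^\partial_j$ is properly embedded and disjoint from every other pre-singularity, the $U_j$ may be taken small enough to be disjoint from all the $R^\circ_{j'}$ and to carry no double point of $h$ other than along $R^\partial_j$, so that $U_j\cong R^\partial_j\times[-1,1]\cong S^1\times[0,1]\times[0,1]$ and $h$ restricts to an embedding on each $U_j$.

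Next I would cut. Let $P_0,\dots,P_k$ be the components of $\Bpre\setminus\bigcup_j\Int(U_j)$; there are exactly $k+1$ of them, since the $R^\partial_j$ are disjoint and each is separating --- it splits $\Bpre$ into $B(R_j)$ and $T(R_j)$. Hence $\Bpre=\big(\bigsqcup_iP_i\big)\cup\big(\bigsqcup_jU_j\big)$, the tube $U_j$ being glued to the pieces along the two annuli $R^\partial_j\times\{-1\}$ and $R^\partial_j\times\{1\}$ lying in their boundaries. Each $R^\circ_j$, being disjoint from all $R^\partial_{j'}$ and hence from all $U_{j'}$, lies in the interior of exactly one piece $P_{i(j)}$, and $h$ identifies it with the copy of $R_j$ contained in $h(U_j)$. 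Therefore, putting $C_i:=h(P_i)$ and $E_j:=h(U_j)$, one obtains $\Brib=\big(\bigcup_iC_i\big)\cup\big(\bigcup_jE_j\big)$ in which the $E_j$ are disjoint copies of $S^1\times[0,1]\times[0,1]$, attached along annuli in the $\partial C_i$'s, disjoint from the $\partial C_i$'s away from those annuli, and meeting the interiors of the $C_i$'s transversely along the annuli $R_j$ --- exactly the local shape of an A--fusion ball.

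What is still missing, and where the real difficulty lies, is that the $C_i$ need not yet be handlebodies trivially embedded in a $3$--dimensional hyperplane: already for a single singularity, cutting $\Bpre$ along $R^\partial$ produces the ball $B(R)$ but also the exterior $T(R)$ of the circle $\ccc(R^\partial)$, and $T(R)$ is a handlebody only when $\ccc(R^\partial)$ is unknotted in $\Bpre$. The plan for dealing with this is to modify the immersed ball $\Brib$ rel $K=\partial\Brib$ so as to absorb the knotting of each such piece into the tubes incident to it: since every circle and every properly embedded arc is unknotted in $S^4$, and embedded handlebodies in $S^4$ can be put in standard position, one can slide the attaching region of an incident tube through the bad piece along an arc unravelling its knotting --- the definition of an A--fusion ball permits the interiors of the $E_j$ to be arbitrarily complicated --- and iterate until every piece is a handlebody. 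It then remains to isotope $\Brib$ rel $K$ so that the disjoint handlebodies $C_i$ sit in standard position inside a hyperplane, pushing the (by now possibly very twisted) tubes $E_j$ out of the way. The delicate part of the whole argument is precisely this last stage: carrying out the unknotting and standardization moves compatibly with the tube structure and without ever disturbing $K$.
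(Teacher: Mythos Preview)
Your first two paragraphs are correct and indeed coincide with what the paper does at the very end of its argument. You also correctly locate the obstruction: when some $\ccc(R^\partial_j)$ is knotted in $\Bpre$, the piece $T(R_j)$ is a nontrivial knot exterior, not a handlebody.

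The gap is in your proposed fix. ``Sliding the attaching region of an incident tube through the bad piece'' cannot change the homeomorphism type of the piece. The pieces $P_i$ are determined \emph{intrinsically} by how the properly embedded annuli $R^\partial_j$ sit inside the abstract ball $\Bpre$; nothing you do to the tubes $E_j$ in $S^4$---rerouting them, letting their interiors cross the $C_i$ in complicated ways, sliding their attaching annuli around $\partial C_i$---alters that. Making a tube pass through a piece only adds \emph{interior} pre-singularities; the boundary pre-singularities, and hence the cut-open pieces, are unchanged. So you cannot ``absorb the knotting into the tubes'' in the way you suggest.

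What the paper supplies is a mechanism that actually changes the boundary pre-singularities while fixing $K$ up to isotopy. Given a singularity $R$, choose an arc $\gamma\subset\Bpre$ from a point of $\partial\Bpre$ to an interior point of $R^\partial$, together with a disk $D\subset B(R)$ whose boundary is an essential curve on $R^\partial$; then delete from $\Brib$ a regular neighbourhood of $h(\gamma\cup D)$. This is an isotopy of $K$ (a finger move, as in the proof that ribbon implies $A$--ribbon), but it replaces the single annular singularity $R$ by two. By routing $\gamma$ carefully one arranges that the co-core closure of one of the two new boundary pre-singularities is $\ccc(R^\partial)$ with a prescribed crossing changed, while the other is unknotted. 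Iterating these cuts first unknots each $\ccc(R^\partial_j)$ crossing by crossing, and a further round unlinks the resulting collection of co-core closures. Only after this modification do the cut-open pieces become genuine handlebodies, at which point your final ambient isotopy into a hyperplane applies. The point you were missing is that one must be willing to \emph{increase} the number of singularities of the $A$--ribbon ball to simplify the pieces.
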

\begin{proof}
 Let $K$ be an A--ribbon 2--knot. Let $\Brib$ be an A--ribbon ball for $K$. We will modify the A--ribbon ball $\Brib$ in order to get an A--ribbon ball whose boundary pre-singularities are unknotted and unlinked, in the sense that the closures of their co-cores form a trivial link ---assuming that these closures are disjoint. 
 
 We first prove that we can split these co-core at any point. Fix a singularity $R$. Set $\xi=\ccc(R^\partial)$. Take a path $\gamma$ in $\Bpre$ from $x\in \Int(\xi\cap\partial\Bpre)$ to $y\in \Int(\xi\cap R^\partial)$ such that $x=\gamma\cap\partial\Bpre$ and the interior of $\gamma$ is disjoint from all the pre-singularities. Let $D$ be a disk embedded in $B(R)$ whose interior lies in the interior of $B(R)$, which is disjoint from all interior pre-singularities, whose intersections with other boundary pre-singularities, if any, are essential curves 
 on these pre-singularities, and such that $\partial D$ is an essential curve on $R^\partial$ containing $y$. Let $N$ be a neighborhood of $\gamma\cup D$. 
 Like in the proof of Proposition~\ref{propribbonAribbon}, remove the interior of $N$ to get a new A--ribbon ball for $K$, still denoted $\Brib$. 
 Note that the singularity $R$  gives rise to two singularities in the new A--ribbon ball. We now prove that this {\em cutting process} allows to unknot the boundary pre-singularity $R^\partial$. 
 
 Embed $\Bpre$ in $\R^3$ in such a way that there is a projection $p$ onto a plane such that $p(\xi\cap\partial\Bpre)\subset\partial p(\Bpre)$ and the singular points of $p_{|\xi}$ are transverse double points. Fix a crossing $c$ of $\xi$ in this projection. We will use the cutting process to change this crossing. Fix an orientation of $\xi$. Let $x$ be a point of $\xi\cap\partial\Bpre$ such that the arc of $\xi\cap\partial\Bpre$ going from $x$ to an endpoint of $\xi\cap\partial\Bpre$ does not meet any pre-singularity, except $R^\partial$ at its endpoint. 
 Let $y$ be a point of $\xi\cap\Int(R^\partial)$ such that $p(y)$ lies after the crossing $c$ and before the next crossing when running along $p(\xi)$ from $x$ in the sense of the orientation. Fix a framing of $\xi$ pointing toward $T(R)$ at any point. Take the arc of $\xi$ from $x$ to $y$ and push slightly its interior in the direction of the framing in order to define an arc $\gamma$ from $x$ to $y$ that satisfies the above requirements. 
 Add to $\gamma$ a turn around $B(R)$ in order to change its last crossing with $\xi$. 
 Apply the cutting process with this $\gamma$. The singularity $R$ is then divided into an unknotted singularity and a singularity whose closure of the co-core is a knot obtained from $\xi$ by changing the crossing $c$. Since any knot can be trivialized by crossing changes, this proves that we can unknot the boundary pre-singularity $R^\partial$. 
 
 We now unlink the link made of the closures of the co-cores of the boundary pre-singularities. Let $R_i$ for $1\leq i\leq n$ be the singularities of $\Brib$. Let $I\subset\{1,\dots,n\}$ be the minimal set satisfying $\cap_{i=1}^nT(R_i)=\cap_{i\in I}T(R_i)$. Use the cutting process to unknot the pre-singularities $R_i^\partial$ for $i\in I$. This turns the co-cores of these pre-singularities into a tangle in $B^3$. Such a tangle is always trivial up to isotopy. Anyway, we need to iterate by applying this procedure in each ball $B(R_i)$ for $i\in I$. Hence we have to consider the case where the components of the tangle have their extremities in two disjoint disks in $\partial\Bpre$. In this case, we have to prove that we can bring the tangle into a braid position. This can be done using the cutting process to change some crossings of the tangle. At each application of the cutting process, a new singularity appears that has both boundary components in the same disk. With an arc $\gamma$ that joins this new singularity to the other disk, we can apply once again the cutting process to turn the tangle component corresponding to the new singularity into two monotone components.
 
 So we can assume that the boundary pre-singularities of $\Bpre$ are unlinked. It follows that cutting $\Bpre$ along these boundary pre-singularities, we can write it as a disjoint union of handlebodies glued together by copies of $S^1\times [0,1]\times[0,1]$. Make these handlebodies trivially embedded in a common 3-dimensional hyperplane of $S^4$ by an ambient isotopy. This provides an A--fusion ball for $K$.
\end{proof}

\section{Seifert hypersurfaces and Seifert matrices} \label{secseifert}

  \subsection{Levine presentation of the Alexander module of a 2--knot}

We review here the presentation of the Alexander module given by a Seifert matrix. 
We first recall some definitions and well-known facts. Let $K$ be a 2--knot. Let $N(K)$ be a tubular neighborhood of $K$. Set 
$X=M\setminus \Int(T(K))$. Consider the projection $\pi : \pi_1(X) \to \frac{H_1(X)}{torsion} \cong \Z$ 
and the covering map $p : \tilde{X} \to X$ associated with its kernel. The automorphism group $\Aut(\tilde{X})$ of this covering is isomorphic to $\Z$ and acts on $H_1(\tilde{X})$. Denoting the action of a generator of $\Aut(\tilde{X})$ as the multiplication by $t$, 
we get a structure of $\Zt$--module on $\Al(K)=H_1(\tilde{X})$. This $\Zt$--module is called the \emph{Alexander module} of $K$. 
It is known to have a finite presentation, so that it has well-defined elementary ideals. The {\em Alexander polynomial} $\Delta_K$ of $K$ is 
the generator of the smallest principal ideal of $\Zt$ that contains the first elementary ideal of $\Al(K)$. It satisfies $\Delta_K(1)=1$.

Let $K$ be a 2--knot. Let $\Sigma$ be a Seifert hypersurface of $K$. 
Assume the homology groups of $\Sigma$ are torsion-free. Fix bases $(x_1,\dots,x_n)$ and $(X_1,\dots,X_n)$ of $H_1(\Sigma)$ and $H_2(\Sigma)$ respectively, given by homology classes of simple closed curves and embedded surfaces in $\Sigma$. For a simple closed curve $\gamma\subset\Sigma$, define $\gamma^+$ (resp. $\gamma^-$) as the push-off of $\gamma$ in the direction of the positive (resp. negative) normal of $\Sigma$. Define the positive and negative Seifert matrices of $K$ associated with $\Sigma$ and the above bases of its homology groups as:
$$V_\pm=\left(\lk(X_i,x_i^\pm)\right).$$
\begin{proposition}[Levine] \label{propLevine}
 If $H_1(\Sigma)$ is torsion-free, then the matrix $tV_+-V_-$ is a presentation matrix of the $\Zt$--module $\Al(K)$. In particular, $\Delta_K(t)=\det(tV_+-V_-)$. 
\end{proposition}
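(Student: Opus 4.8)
The plan is to carry out the classical cut-and-stack construction of the infinite cyclic cover, read a presentation of $\Al(K)$ off a Mayer--Vietoris sequence, and then identify the inclusion-induced maps appearing in that sequence with the Seifert matrices by means of Alexander duality. Concretely: recall that a Seifert hypersurface $\Sigma$ is a compact oriented connected $3$--manifold in $S^4$ with $\partial\Sigma=K$; push it into the exterior $X$, fix a bicollar $\Sigma\times[-1,1]$ (with $\Sigma=\Sigma\times\{0\}$ and positive normal toward $\Sigma\times\{1\}$), and let $Y$ be the result of cutting $X$ along $\Sigma$, a compact $4$--manifold carrying two boundary copies $\Sigma^\pm=\Sigma\times\{\pm1\}$ of $\Sigma$. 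After absorbing a collar of $K$, $Y$ is homotopy equivalent to the complement $S^4\setminus\Sigma$. The infinite cyclic cover is then $\tilde X=\bigcup_{i\in\Z}Y_i$, where $Y_i$ is the $i$-th copy of $Y$ and the $\Sigma^+$ face of $Y_i$ is glued to the $\Sigma^-$ face of $Y_{i+1}$; the generator $t$ of $\Aut(\tilde X)$ sends $Y_i$ to $Y_{i+1}$. Write $j^\pm\colon\Sigma\hookrightarrow Y$ for the two boundary inclusions, so that $j^\pm_*(x_i)=[x_i^\pm]$ in $H_1(Y)$.

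Next I would apply Mayer--Vietoris to this decomposition, keeping the natural $\Zt$--module structure on all groups. Writing $\tilde\Sigma=\bigsqcup_i\Sigma_i$ for the union of the gluing loci and $\tilde Y=\bigsqcup_iY_i$, one obtains an exact sequence of $\Zt$--modules
\[
H_1(\tilde\Sigma)\xrightarrow{\ \alpha\ }H_1(\tilde Y)\longrightarrow H_1(\tilde X)\xrightarrow{\ \partial\ }H_0(\tilde\Sigma)\xrightarrow{\ \beta\ }H_0(\tilde Y).
\]
Since $\Sigma$ and $Y$ are connected, $H_0(\tilde\Sigma)$ and $H_0(\tilde Y)$ are free of rank one over $\Zt$ and $\beta$ is, up to a unit, multiplication by $1-t$, hence injective; therefore $\partial=0$ and $\Al(K)=H_1(\tilde X)=\operatorname{coker}(\alpha)$. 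Moreover $H_1(\tilde\Sigma)\cong H_1(\Sigma)\otimes_\Z\Zt$ and $H_1(\tilde Y)\cong H_1(Y)\otimes_\Z\Zt$ as $\Zt$--modules, and expanding $\alpha$ in terms of the two inclusions of each $\Sigma_i$ into its neighbouring copies of $Y$ identifies $\alpha$, up to a unit of $\Zt$ and with the gluing orientation fixed as above, with $t\,j^+_*-j^-_*\colon H_1(\Sigma)\otimes\Zt\to H_1(Y)\otimes\Zt$. So the matrix of $t\,j^+_*-j^-_*$ is a presentation matrix of $\Al(K)$.

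It then remains to pin down $H_1(Y)$ and the maps $j^\pm_*$. From $\chi(\Sigma)=\tfrac12\chi(\partial\Sigma)=1$ and $\Sigma$ connected with non-empty boundary one gets $b_1(\Sigma)=b_2(\Sigma)=:n$; Poincar\'e--Lefschetz duality makes $H_2(\Sigma)$ free, so $H_2(\Sigma)\cong\Z^n$, and --- this is where the hypothesis enters --- since $H_1(\Sigma)$ is torsion-free, Alexander duality together with the universal coefficient theorem gives $H_1(Y)\cong H_1(S^4\setminus\Sigma)\cong H^2(\Sigma)\cong\operatorname{Hom}(H_2(\Sigma),\Z)\cong\Z^n$, identified with the dual of $H_2(\Sigma)$ through the linking pairing $\lk(-,-)$ of $S^4$. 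Equipping $H_1(Y)$ with the basis dual to $(X_1,\dots,X_n)$, a class $c\in H_1(Y)$ corresponds to the functional $X_i\mapsto\lk(c,X_i)$; applied to $c=j^\pm_*(x_j)=[x_j^\pm]$ this yields $\lk(X_i,x_j^\pm)=(V_\pm)_{ij}$, so the matrix of $j^\pm_*$ is precisely $V_\pm$. Hence $tV_+-V_-$ is a presentation matrix of $\Al(K)$, and being a square $n\times n$ matrix over $\Zt$ its first elementary ideal is the principal ideal generated by $\det(tV_+-V_-)$, whence $\Delta_K(t)=\det(tV_+-V_-)$. (The normalization $\Delta_K(1)=1$ is consistent, since $V_+-V_-$ represents the unimodular intersection pairing $H_1(\Sigma)\times H_2(\Sigma,\partial\Sigma)\to\Z$ and therefore has determinant $\pm1$.)

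The hard parts, I expect, are twofold. First, one must verify that the Alexander-duality isomorphism $H_1(Y)\cong\operatorname{Hom}(H_2(\Sigma),\Z)$ really does intertwine the inclusions $j^\pm_*$ with the linking-number matrices $V_\pm$, and that the torsion-freeness of $H_1(\Sigma)$ is exactly what keeps $H_1(Y)$ free of rank $n$ --- so that the presentation matrix is square and the relevant elementary ideal is principal. Second, and this is the place where it is easiest to slip, one must track the orientation conventions (the direction of the positive normal, which boundary face of $Y_i$ is glued to $Y_{i+1}$, the direction in which $t$ shifts) carefully enough that the presentation emerges as $tV_+-V_-$ rather than, say, $V_+-tV_-$ or a $t\mapsto t^{-1}$ variant; this genuinely matters, because unlike for classical knots, $\Delta_K(t)$ and $\Delta_K(t^{-1})$ differ in general for $2$--knots.
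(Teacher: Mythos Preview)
Your argument is correct and is precisely Levine's cut--and--paste/Mayer--Vietoris proof, adapted to $\Z$ coefficients via the torsion-freeness hypothesis. The paper does not supply a proof of this proposition: it attributes the statement to Levine and adds a remark that the only obstruction to passing from $\Q$ to $\Z$ is torsion in $H_1(\Sigma)$ and $H_1(S^4\setminus\Sigma)$ (which, by Alexander duality and universal coefficients, vanish together). Your write-up fills in exactly the details that remark alludes to, so there is no divergence of approach to report.
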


\begin{remark}
 In \cite[\S2]{L1}, Levine works over $\Q$ and gets a presentation of the $\Qt$--module $\Q\otimes\Al(K)$ for any 2--knot $K$. The only obstruction to work over the integers comes from the possible existence of torsion in the homology of the Seifert hypersurface and its complement. Note that Alexander duality and the universal coefficient theorem imply that $H_1(\Sigma)$ is torsion-free if and only if $H_1(S^4\setminus\Sigma)$ is torsion-free, while $H_2(\Sigma)$ and $H_2(S^4\setminus\Sigma)$ are always torsion-free thanks to Poincar\'e duality and the universal coefficient theorem.
\end{remark}

\begin{corollary} \label{corZtorsion}
 If a 2--knot admits a Seifert hypersurface $\Sigma$ such that $H_1(\Sigma)$ is torsion-free, then its Alexander module has no $\Z$--torsion.
\end{corollary}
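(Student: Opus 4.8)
The plan is to feed the Seifert--matrix presentation of Proposition~\ref{propLevine} into a short $\Tor$ computation over $\Z$; the only substantive input is that $\Delta_K(1)=1$. First I would fix a Seifert hypersurface $\Sigma$ of $K$ with $H_1(\Sigma)$ torsion-free and let $A=tV_+-V_-$ be the presentation matrix provided by Proposition~\ref{propLevine}: a square matrix over $\Zt$, say $n\times n$, with $\Al(K)=\mathrm{coker}(A)$ and $\det A=\Delta_K$. Since $\Delta_K(1)=1$, the element $\det A$ is nonzero in the integral domain $\Zt$, hence $A$ is injective on $\Zt^{\,n}$, and one obtains a short exact sequence of abelian groups
\[
0 \longrightarrow \Zt^{\,n} \xrightarrow{\ A\ } \Zt^{\,n} \longrightarrow \Al(K) \longrightarrow 0 .
\]

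Next, for a prime $p$ I would apply $-\otimes_\Z\Z/p\Z$. Because $\Zt^{\,n}$ is free over $\Z$, we have $\Tor_1^\Z(\Zt^{\,n},\Z/p\Z)=0$, so the long exact sequence collapses to an isomorphism
\[
\Tor_1^\Z\bigl(\Al(K),\Z/p\Z\bigr)\;\cong\;\ker\bigl(\bar A\colon \mathbb{F}_p[t^{\pm1}]^{\,n}\to \mathbb{F}_p[t^{\pm1}]^{\,n}\bigr),
\]
where $\bar A$ is the reduction of $A$ modulo $p$. The determinant $\det\bar A$ is the reduction of $\Delta_K$ modulo $p$, and its value at $t=1$ is $\bar 1\neq 0$ in $\mathbb{F}_p$; thus $\det\bar A\neq 0$ in the integral domain $\mathbb{F}_p[t^{\pm1}]$, so $\bar A$ is injective and $\ker\bar A=0$. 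Hence $\Tor_1^\Z(\Al(K),\Z/p\Z)=0$ for every prime $p$.

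Finally, since $\Tor_1^\Z(M,\Z/p\Z)$ is canonically the subgroup of $p$--torsion elements of an abelian group $M$, the vanishing above means that $\Al(K)$ has no $p$--torsion for any prime $p$, hence no $\Z$--torsion, which is the claim. I do not expect any real obstacle here: all of the topology is packaged in Proposition~\ref{propLevine} (and the resulting squareness and injectivity of $tV_+-V_-$), and the remaining content is just the elementary observation that $\Delta_K(1)=1$ forces $\Delta_K$, and each of its reductions modulo $p$, to be nonzero.
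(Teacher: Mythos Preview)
Your argument is correct. Both your proof and the paper's rely only on Proposition~\ref{propLevine} together with $\Delta_K(1)=1$, but the execution differs. The paper argues by a direct element chase: given $a$ with $ka=0$, lift to $\Zt^{\,n}$, write $ka=Mb$, multiply by the adjugate so that $k\,\mathrm{Cof}(M)a=\Delta(t)b$, and use that $\Delta(1)=1$ forces the content of $\Delta$ to be $1$, hence $k\mid b$ in $\Zt^{\,n}$ and $a$ is already in the image of $M$. Your route instead reduces modulo each prime $p$ and reads off the $p$--torsion as $\ker\bar A$ via $\Tor_1^\Z$, which vanishes because $\overline{\Delta_K}(1)=\bar 1$ keeps the determinant nonzero over $\mathbb{F}_p[t^{\pm1}]$. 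Your approach is a bit more conceptual and avoids the adjugate/content manipulation; the paper's is self-contained and avoids invoking $\Tor$. Either way the algebraic content is the same: a square presentation over $\Zt$ whose determinant is primitive (indeed evaluates to $1$ at $t=1$) cannot produce $\Z$--torsion in its cokernel.
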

\begin{proof}
 The matrix presentation of the Alexander module $\Al$ given by Proposition \ref{propLevine} is a square matrix $M$ whose determinant is the Alexander 
 polynomial $\Delta$. We have $\Al=\left(\oplus_{i=1}^n\Zt g_i\right)/\left(\oplus_{j=1}^n\Zt r_j\right)$. Take $a\in\Al$ and represent it 
 by a column vector expressing it in terms of the $g_i$. Assume $ka=0$ for some non trivial integer $k$. Then there is $b\in\oplus_{i=1}^n\Zt g_i$ 
 such that $ka=Mb$. Hence $k\mathrm{Cof}(M)a=\det(M)b=\Delta(t)b$, where $\mathrm{Cof}(M)$ is the cofactor matrix of $M$. Since $\Delta(1)=1$, it implies that $b=kc$ with $c\in\oplus_{i=1}^n\Zt g_i$, so that $a=Mc$ and finally $a=0$ in $\Al$.
\end{proof}

  \subsection{Seifert hypersurface associated with an A--ribbon ball}

In this subsection, we associate a hypersurface with any A--ribbon ball and we compute its homology. Under some condition, we deduce 
a presentation of the Alexander module of the 2--knot that bounds this A--ribbon ball.

Let $K$ be an A--ribbon 2--knot and let $\Brib$ be an A--ribbon ball for $K$. We will construct from $\Brib$ a Seifert hypersurface for $K$. 
Let $R$ be an A--ribbon singularity of $\Brib$. Let $h:\Bpre\looparrowright \Brib\subset S^4$ be an immersion associated with $\Brib$. 
Let $B_R^\partial$ (resp. $B_R^\circ$) be the image by $h$ of a regular neighborhood of $R^\partial$ (resp. of $R^\circ$) in $\Bpre$ 
that does not meet the other pre-singularities. We say that $B_R^\partial$ (resp. $B_R^\circ$) is the {\em boundary leaf} 
(resp. the {\em interior leaf}) of $\Brib$ at $R$. Let $N(R)$ be a regular neighborhood of $R$ in $S^4$ such that 
$N(R)\cap\Brib\subset B_R^\partial\cup B_R^\circ$. Remove from $\Brib$ the interior of $N(R)$. The created boundary is made of a 
$\partial([0,1]^2)\times S^1$ on $B_R^\circ$ and two $[0,1]\times S^1$ on $B_R^\partial$, where the $S^1$ factors correspond to the core of $R$. 
Glue the last two along $\{0\}\times[0,1]\times S^1$ and $\{1\}\times[0,1]\times S^1$, 
choosing which $[0,1]\times S^1$ is glued to which $\{i\}\times[0,1]\times S^1$ in order to respect the orientation of the hypersurface. 
The process is described Figure \ref{figslice} at a point of the $S^1$ factor. 
Performing the same manipulation at each singularity of $\Brib$, we get the {\em Seifert hypersurface of $K$ associated with $\Brib$}, which we denote $\Sigma$. 

\newcommand{\neast}[1]{\draw[pattern = north east lines] #1;}
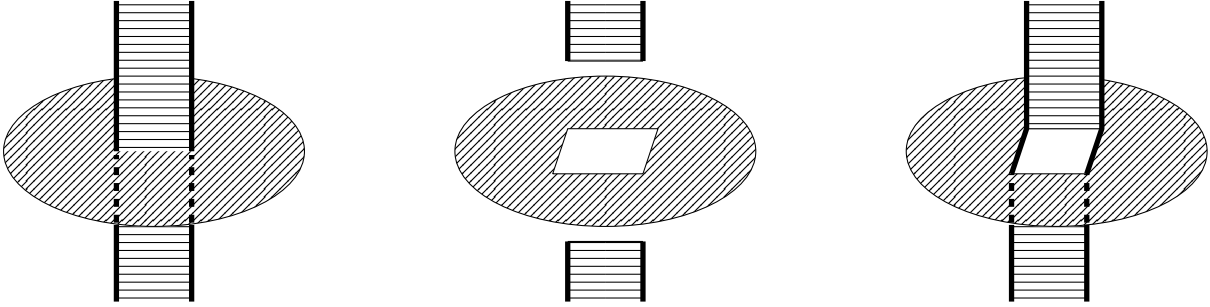
\begin{figure}[htb] 
\begin{center}
\begin{tikzpicture}
 \begin{scope}
  \hor{(0.5,-2) -- (0.5,0) -- (-0.5,0) -- (-0.5,-2)}
  \neup{(0,0) circle (2 and 1)}
  \horup{(-0.5,0) -- (-0.5,2) -- (0.5,2) -- (0.5,0)}
  \foreach \x in {-0.5,0.5} {
  \draw[line width=2pt] (\x,0) -- (\x,2);
  \draw[line width=2pt] (\x,-0.98) -- (\x,-2);
  \draw[line width=2pt,dashed] (\x,-0.95) -- (\x,-0.05);}
 \end{scope}
 \begin{scope} [xshift=6cm]
  \neast{(0,0) circle (2 and 1)}
  \draw[fill=white] (-0.7,-0.3) -- (-0.5,0.3) -- (0.7,0.3) -- (0.5,-0.3) -- (-0.7,-0.3);
  \hor{(0.5,2) -- (0.5,1.2) -- (-0.5,1.2) -- (-0.5,2)}
  \hor{(0.5,-2) -- (0.5,-1.2) -- (-0.5,-1.2) -- (-0.5,-2)}
  \draw (0.5,-1.2) -- (-0.5,-1.2) (0.5,1.2) -- (-0.5,1.2);
  \foreach \x in {-0.5,0.5} {
  \draw[line width=2pt] (\x,-2) -- (\x,-1.2) (\x,1.2) -- (\x,2);}
 \end{scope}
 \begin{scope} [xshift=12cm]
  \horup{(0.4,-2) -- (0.4,-0.3) -- (-0.6,-0.3) -- (-0.6,-2)}
  \neup{(0,0) circle (2 and 1)}
  \draw[fill=white] (-0.6,-0.3) -- (-0.4,0.3) -- (0.6,0.3) -- (0.4,-0.3) -- (-0.6,-0.3);
  \horup{(0.6,2) -- (0.6,0.3) -- (-0.4,0.3) -- (-0.4,2)}
  \draw (-0.4,0.3) -- (0.6,0.3);
  \foreach \x in {-0.6,0.4} {
  \draw[line width=2pt] (\x +0.2,2) -- (\x +0.2,0.3) -- (\x,-0.3);
  \draw[line width=2pt] (\x,-0.98) -- (\x,-2);
  \draw[line width=2pt,dashed] (\x,-0.95) -- (\x,-0.3);}
 \end{scope}
\end{tikzpicture}
\end{center}
\caption{A slice of a neighborhood of the singularity.} \label{figslice}
\end{figure}

We now have a closer look at the structure of the hypersurface $\Sigma$ and its homology groups. 
Denote by $R_i$ for $i=1,\dots,n$ the singularities of the A--ribbon ball $\Brib$. For each $i$, set $X_i=\partial B_{R_i}^\circ$. 
When we cut $\Sigma$ along the tori $X_i$, we see from the above construction that we obtain a 3--ball with $2n$ solid tori removed; 
denote it $\check{\Sigma}$ (see Figure \ref{figSeiferthyp}). Note that $\Sigma$ is recovered from $\check{\Sigma}$ by glueing $n$ handles $A_i$ homeomorphic to $S^1\times S^1\times I$, 
where the first $S^1$ factor corresponds to the core of $R_i$ in $\Brib$, the second $S^1$ factor corresponds to the meridian of $R_i$, 
and $X_i=S^1\times S^1\times\{1\}\subset A_i$.
\begin{figure}
\begin{center}
\scalebox{0.7}{\input{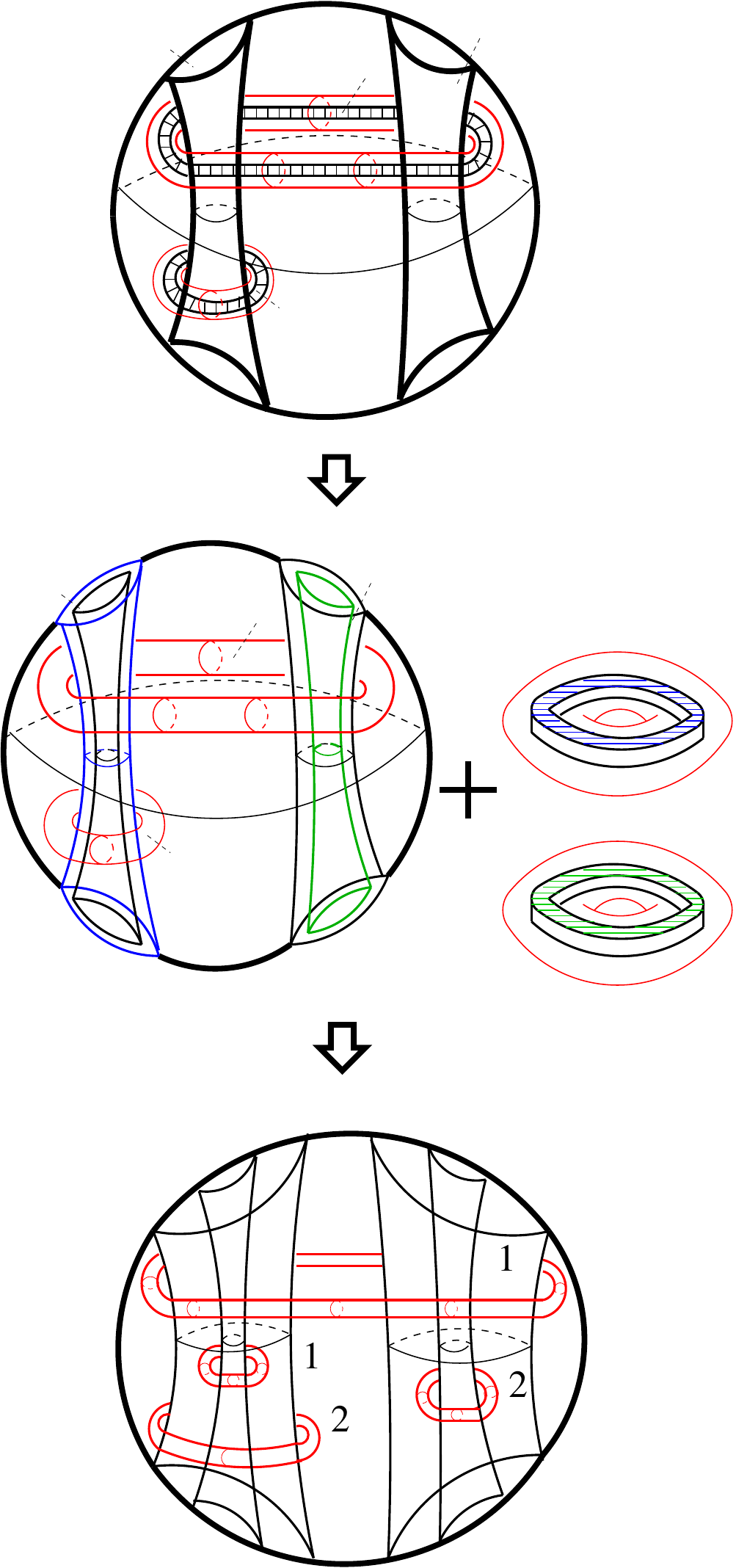_t}}\hspace{1.5cm}\raisebox{1cm}{\scalebox{0.7}[0.8]{\input{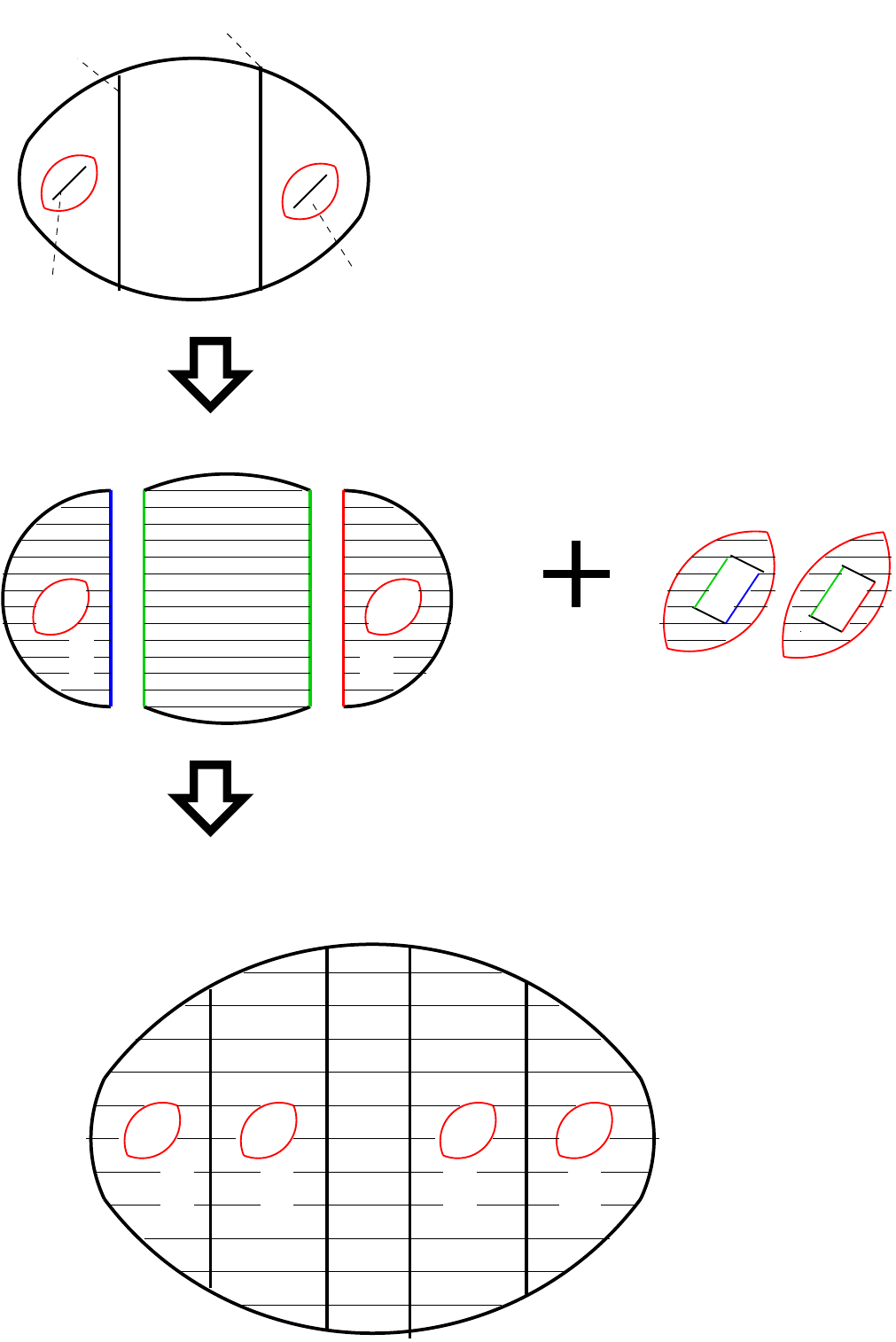_t}}}
\caption{Seifert hypersurface associated with an A--ribbon ball.} \label{figSeiferthyp}
\end{center}
\end{figure}
For each $i$, set:
\begin{itemize}
 \item $x_i=\{*\}\times S^1\times\{1\}\subset A_i$, $x_i'=\{*\}\times S^1\times\{0\}\subset A_i$, $x_i^I=\{*\}\times S^1\times I\subset A_i$;
 \item $\beta_i=S^1\times\{*\}\times\{1\}\subset A_i$, $\beta_i'=S^1\times\{*\}\times\{0\}\subset A_i$, $\beta_i^I=S^1\times\{*\}\times I\subset A_i$;
 \item $X_i'=S^1\times S^1\times\{0\}\subset A_i$.
\end{itemize}
Note that $x_i$ corresponds to a meridian of the annulus $R_i^\circ$ in $\Bpre$. Let $y_i$ be a simple closed curve in $\Sigma$
such that $y_i\cap(\Sigma\setminus\check{\Sigma})=\{*\}\times\{*\}\times I\subset A_i$. 

Define the {\em link of interior pre-singularities of $\Brib$} as $L_S(\Brib)=\sqcup_{i=1}^n c(R_i^\circ)$ and the associated linking matrix 
$\Lk_S(\Brib)=\left(\lk_\Bpre(c(R_i^\circ),c(R_j^\circ))\right)_{1\leq i,j\leq n}$. Note that $\Lk_S(\Brib)$ is also the linking matrix of the link 
$\sqcup_{i=1}^n\beta_i$ viewed in $\Bpre$. This matrix plays a crucial role in the computation of $H_1(\Sigma)$ and $H_2(\Sigma)$. 
We will use the long exact sequence in homology associated with the pair $(\Sigma,\check{\Sigma})$. We first compute 
the relative homology groups. By excision, we have $H_k(\Sigma,\check{\Sigma})\cong H_k(\sqcup_{i=0}^nA_i,\sqcup_{i=0}^n\partial A_i)$. 
Thus:
\begin{itemize}
 \item $H_3(\Sigma,\check{\Sigma})\cong\Z^n$ is generated by the fundamental classes $[A_i]$,
 \item $H_2(\Sigma,\check{\Sigma})\cong\Z^{2n}$ is generated by the $[x_i^I]$ and the $[\beta_i^I]$,
 \item $H_1(\Sigma,\check{\Sigma})\cong\Z^n$ is generated by the classes of the $y_i\cap A_i$.
\end{itemize}
The homology of $\check{\Sigma}$ is easily computed: $H_1(\check{\Sigma})\cong\Z^{2n}$ is generated by the $[x_i]$ and $[x_i']$ and 
$H_2(\check{\Sigma})\cong\Z^{2n}$ is generated by the $[X_i]$ and $[X_i']$.
The long exact sequence gives:
$$0\to H_3(\Sigma,\check{\Sigma})\to H_2(\check{\Sigma})\to H_2(\Sigma)\to H_2(\Sigma,\check{\Sigma})\to H_1(\check{\Sigma})\to H_1(\Sigma)
 \to H_1(\Sigma,\check{\Sigma})\to 0.$$
Since $H_1(\Sigma,\check{\Sigma})$ is free, the sequence splits at $H_1(\Sigma)$ and we have 
$H_1(\Sigma)\cong H_1(\Sigma,\check{\Sigma})\oplus\left(H_1(\check{\Sigma})/\partial_2(H_2(\Sigma,\check{\Sigma}))\right)$.
Now $\partial_2([x_i^I])=[x_i]-[x_i']$ and $\partial_2([\beta_i^I])=[\beta_i]-[\beta_i']$. In $\check{\Sigma}$, the $\beta_i'$ bound 
embedded disks and $[\beta_i]=\sum_{j=1}^n\lk_{\Bpre}(\beta_i,\beta_j)[x_j]$. Thus we get:
$$H_1(\Sigma)\cong\Z^n\oplus\left(\Z^n/\Lk_S(\Brib)\Z^n\right),$$
where the first factor is freely generated by the $[y_i]$ and the second factor is generated by the $[x_i]$.
Similarly, $H_2(\Sigma,\check{\Sigma})$ is free, thus we have 
$H_2(\Sigma)\cong \ker(\partial_2)\oplus\left(H_2(\check{\Sigma})/\partial_3(H_3(\Sigma,\check{\Sigma}))\right)$. 
The expression of $\partial_2$ given above shows that $\ker(\partial_2)\cong\Z^{n-s}$ where $s$ is the rank of $\Lk_S(\Brib)$. 
One easily deduces $H_2(\Sigma)\cong\Z^{2n-s}$. 

We now assume $\Lk_S(\Brib)=0$. In this case, there are embedded surfaces $Y_i$ in $\Sigma$ such that $Y_i\cap(\Sigma\setminus\check{\Sigma})=\beta_i^I$. Fix orientations of the $c(R_i^\circ)$ and orient the $x_i$ so that $\lk_\Bpre(x_i,c(R_i^\circ))=1$. Orient the $y_i$ and $Y_i$ so that:
$$\langle X_i,y_j\rangle_\Sigma=\delta_{ij}\quad\textrm{and}\quad\langle Y_i,x_j\rangle_\Sigma=-\delta_{ij}.$$
The families $(x_1,\dots,x_n,y_1,\dots,y_n)$ and $(X_1,\dots,X_n,Y_1,\dots,Y_n)$ are bases of $H_1(\Sigma)$ and $H_2(\Sigma)$ respectively, 
dual in the above sense.
It is easily checked that $\lk(X_i,x_j^\pm)=0$ for any $i,j$. Hence the Seifert matrices associated with $\Sigma$ and the above basis of its homology groups 
are:
$$V_\pm=\begin{pmatrix}
         0 & U_\pm \\ W_\pm & *
        \end{pmatrix},$$
where $U_\pm=\left(\lk(X_i,y_j^\pm)\right)_{1\leq i,j\leq n}$ and $W_\pm=\left(\lk(Y_i,x_j^\pm)\right)_{1\leq i,j\leq n}$. 
Unlike the case of classical knots, we don't have $W_\pm=\tr{U_\mp}$ in general, thus we don't get the factorization property. 
We give in the next section a topological characterization of the A--ribbon balls that provide the equalities $W_\pm=\tr{U_\mp}$.

  \subsection{Computing the Seifert matrices from the preimage ball} \label{subseccomputeSeifert}

Keeping the notations of the previous subsection and the condition $\Lk_S(\Brib)=0$, we now explain how to compute the matrices $U_\pm$ and $W_\pm$ from the preimage ball with some orientation information. This information is given for each singularity $R$ by an arrow at a point of $R^\partial$ that gives the direction of the negative normal to the interior leaf $B_R^\circ$. Orient the boundary pre-singularities so that these arrows 
give the direction of their positive normal in $\Bpre$. We assume that the cores of all the singularities are oriented and we orient 
the cores of the pre-singularities accordingly. 
To make our computation, we need to have a closer look at the local picture around a singularity. 

Curves and surfaces can be drawn in $\Bpre$ that correspond to the elements of the bases $(x_1,\dots,x_n,y_1,\dots,y_n)$ and 
$(X_1,\dots,X_n,Y_1,\dots,Y_n)$ of $H_1(\Sigma)$ and $H_2(\Sigma)$ defined above; we will use the same notations. In $\Bpre$, 
$X_i$ is the oriented boundary of a tubular neighborhood of $R_i^\circ$, $x_i$ is an oriented meridian of the core $c(R_i^\circ)$, 
$y_i$ is an arc joining the two points in the preimage of a point of $R_i$ and $Y_i$ is a surface whose boundary 
is $c(R_i^\partial)\sqcup(-c(R_i^\circ))$. Choose the surface $Y_i$ as a disjoint union $Y_j=Y_j^\partial\sqcup (-Y_j^\circ)$ 
where $\partial Y_j^\partial=c(R_j^\partial)$, $\partial Y_j^\circ=c(R_j^\circ)$ and $Y_j^\partial$ is a disk properly embedded in $B(R_j^\partial)$. 
Choose the arc $y_i$ so that it meets $R_i^\partial$ and $R_i\circ$ only at its endpoints. 
The next two results express the coefficients of the matrices $U_\pm$ and $W_\pm$ in terms of algebraic intersections in $\Bpre$. They are illustrated in Figures \ref{Xi}.

\begin{figure}
\begin{center}
\raisebox{1.8cm}{\scalebox{0.9}{\input{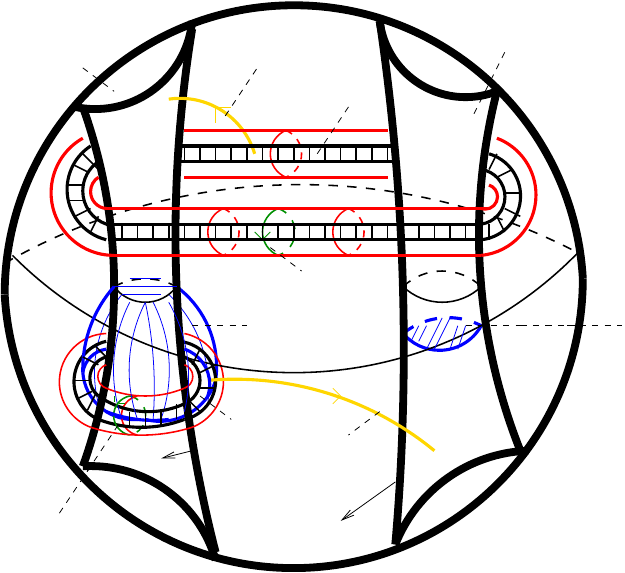_t}}}\hspace{1.5cm}\scalebox{0.7}[0.8]{\input{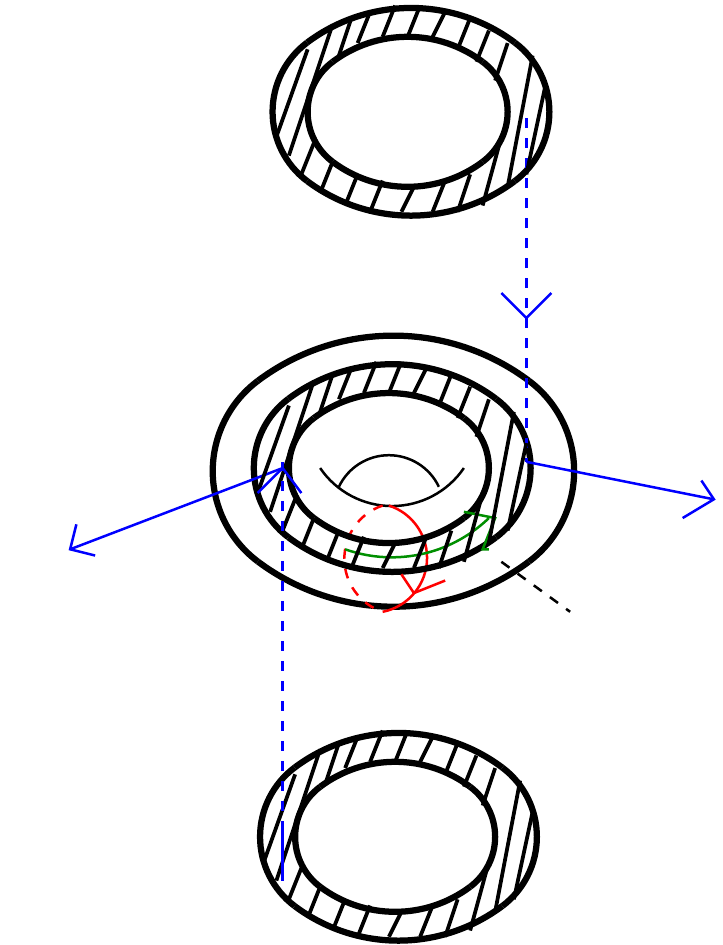_t}}
\caption{Local computations of algebraic intersection numbers} \label{figcalculsigne}
\label{Xi}
\end{center}
\end{figure}

\begin{lemma} \label{lemmamatrixU}
 Set $\epsilon_i=1$ if the positive normal to $R_i^\partial$ gives the direction of $y_i$ at its endpoint on $R_i^\partial$, $\epsilon_i=-1$ otherwise. 
 We have:
 $$\lk(X_i,y_j^\pm)=\langle R_i^\partial,y_j\rangle_{\Bpre}\quad \textrm{ if } i\neq j,$$
 and: 
 $$\lk(X_i,y_i^+)=\left\lbrace\begin{array}{c} -1 \\ 0 \end{array}\right. \textrm{ and \ } 
   \lk(X_i,y_i^-)=\left\lbrace\begin{array}{c} 0 \\ 1 \end{array}\right.
   \begin{array}{l} \textrm{if } \epsilon_i=-1, \\ \textrm{otherwise.} \end{array}$$
\end{lemma}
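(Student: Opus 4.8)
The plan is to compute the linking numbers $\lk(X_i, y_j^\pm)$ directly inside the preimage ball $\Bpre$, converting the linking pairing in $S^4$ into an algebraic intersection count in $\Bpre$ via a Seifert-type surface for $X_i$. Recall that $X_i = \partial B_{R_i}^\circ$ bounds the boundary leaf $B_{R_i}^\circ \subset \Brib$, which is (essentially) the image of a regular neighborhood of the annulus $R_i^\circ$ in $\Bpre$; more precisely, $X_i$ bounds a $2$--sphere $\Sigma_i$ obtained by capping the tubular-neighborhood-boundary of $R_i^\circ$ — since $c(R_i^\circ)$ is a knot in the ball $\Bpre \subset S^4$, it bounds a Seifert surface, and pushing its boundary-of-neighborhood off appropriately gives a surface $\Sigma_i$ whose only essential intersections with anything are controlled by intersections with $R_i^\circ$ and with the other pre-singularities. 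Then $\lk(X_i, y_j^\pm) = \langle \Sigma_i, y_j^\pm\rangle_{S^4}$, and since $y_j^\pm$ is the push-off of the arc $y_j$ off $\Sigma$, and $\Sigma$ near $R_i$ looks like the picture in Figure \ref{figslice}, this reduces to counting intersection points of $\Sigma_i$ with $y_j$ inside $\Bpre$ with appropriate signs.

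First I would treat the off-diagonal case $i \neq j$. Here $y_j$ is an arc in $\Bpre$ meeting $R_j^\partial$ and $R_j^\circ$ only at its endpoints, hence it is disjoint from $R_i^\circ$, and its only possible intersections with $\Sigma_i$ come from where $y_j$ crosses $R_i^\partial$: indeed the Seifert surface for $X_i$ can be taken so that, away from a neighborhood of $R_i^\circ$ itself, it agrees with $R_i^\partial$ pushed off in the direction read from the boundary leaf. The push-off to $y_j^\pm$ does not change the count because $y_j$'s endpoints lie on $R_j^\circ, R_j^\partial$ with $j \neq i$, so there is no interaction with the capping region near $R_i$. A sign bookkeeping — using the co-orientation conventions from the "Conventions and notations" section and the orientation of $R_i^\partial$ fixed via the negative-normal arrows — yields $\lk(X_i, y_j^\pm) = \langle R_i^\partial, y_j\rangle_{\Bpre}$, independent of the sign $\pm$, which is the claimed formula.

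Next I would handle the diagonal case $i = j$, which is where the real work is and which I expect to be the main obstacle. Now $y_i$ has one endpoint on $R_i^\circ$ and one on $R_i^\partial$, so it necessarily interacts with the neighborhood of $R_i^\circ$ that was capped to form $\Sigma_i$, and the push-offs $y_i^+$ and $y_i^-$ now genuinely differ: one of them is pushed \emph{into} the region bounded off by $\Sigma_i$ and one is pushed out, contributing a single extra intersection point of sign $\pm 1$ exactly when the normal direction matches the direction of $y_i$ at its endpoint on $R_i^\partial$ — this is precisely the role of $\epsilon_i$. I would extract the local model of $\Sigma$ and of $y_i, y_i^\pm$ near the singularity from Figure \ref{figslice}/Figure \ref{Xi} (the $t<0$, $t=0$, $t>0$ slices), check that all intersections of $y_i$ with $R_i^\partial$ other than the endpoint contribute as in the off-diagonal case (but there are none, by the choice of $y_i$), and that the endpoint on $R_i^\circ$ contributes $0$ or $1$ to one of $y_i^\pm$ and nothing to the other, with the sign governed by $\epsilon_i$. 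Assembling: if $\epsilon_i = -1$ then $\lk(X_i, y_i^+) = -1$ and $\lk(X_i, y_i^-) = 0$; otherwise $\lk(X_i, y_i^+) = 0$ and $\lk(X_i, y_i^-) = 1$. The delicate point throughout is orientation/sign consistency between (a) the "outward normal first" co-orientation convention, (b) the orientation of $R_i^\partial$ induced by the negative-normal-to-$B_{R_i}^\circ$ arrows, (c) the orientations of $X_i$ and $y_i$ fixed by the duality relations $\langle X_i, y_j\rangle_\Sigma = \delta_{ij}$; I would pin these down once and for all on the local model and then quote the figure for the global picture.
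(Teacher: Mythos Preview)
Your overall architecture matches the paper's: compute $\lk(X_i,y_j^\pm)$ as an algebraic intersection of $y_j$ with a chain bounding $X_i$, then read this off in $\Bpre$; treat $i\neq j$ first and then the diagonal case via the endpoint of $y_i$ on $R_i^\partial$. But your description of the bounding chain is wrong in a way that would derail the computation.

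The torus $X_i=\partial B_{R_i}^\circ$ is a closed \emph{surface} in $S^4$; to compute its linking with the $1$--cycle $y_j^\pm$ you need a \emph{3--chain} with boundary $X_i$, not a ``$2$--sphere $\Sigma_i$''. The paper simply takes the solid torus $T_i=B_{R_i}^\circ$ itself (the interior leaf), pushes it off $\Sigma$ to $T_i^\mp$, and uses $\lk(X_i,y_j^\pm)=\lk(X_i^\mp,y_j)=\langle T_i^\mp,y_j\rangle_{S^4}$. No Seifert surface for $c(R_i^\circ)$ is needed or relevant. Relatedly, your claim that the bounding object ``agrees with $R_i^\partial$ pushed off'' is backwards: the bounding solid torus is the \emph{interior} leaf and has nothing to do with $R_i^\partial$ as a chain. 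The reason $R_i^\partial$ appears in the answer is that a point of $y_j\subset\Sigma$ lies inside $T_i^\mp$ precisely when, in $\Bpre$, it sits on the boundary leaf at a point of the singularity $R_i$, i.e.\ when $y_j$ crosses $R_i^\partial$. Once you make this correction, your off-diagonal and diagonal analyses go through exactly as in the paper, including using the duality $\langle X_i,y_i\rangle_\Sigma=1$ to fix the direction of $y_i$ at its $R_i^\partial$--endpoint and the observation $\langle R_i^\partial,\Int(y_i)\rangle_{\Bpre}=0$.
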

\begin{proof}
 The torus $X_i^\pm$ is the boundary of a solid torus $T_i^\pm$ transverse to $\Sigma$, where $T_i=B_{R_i}^\circ$. 
 The linking $\lk(X_i^\pm,\gamma)$ of $X_i^\pm$ with a simple closed curve $\gamma$ transverse to $T_i^\pm$ is given by the algebraic 
 intersection number $\langle T_i^\pm,\gamma\rangle_{S^4}$. The contribution of an intersection point is $-1$ if $\gamma$ is oriented as the positive 
 normal to $T_i^\pm$ at that point, $1$ otherwise. If $\gamma=y_j$, such an intersection point corresponds in $\Bpre$ to an intersection point of $y_j$ 
 with the pre-singularity $R_i^\partial$. Hence if $j\neq i$:
 $$\lk(X_i,y_j^\pm)=\lk(X_i^\mp,y_j)=\langle R_i^\partial,y_j\rangle_{\Bpre}.$$
 In the case $j=i$, a special attention should be paid to the endpoints of the arc representing $y_i$ in $\Bpre$. These points correspond in $\Sigma$ 
 to an intersection of $y_i$ with either $T_i^+$ or $T_i^-$. It follows from the duality between $X_i$ and $y_i$ that the orientation of $y_i$ 
 at its endpoint lying on $R_i^\partial$ goes toward $R_i^\partial$. 
 If $y_i$ arrives on that point from the positive side (resp. negative side) of $R_i^\partial$, then the contribution of this point to 
 $\lk(X_i,y_i^+)$ is $-1$ (resp. $0$) and the contribution to $\lk(X_i,y_i^-)$ is $0$ (resp. $1$). 
 To conclude, note that $\langle R_i^\partial,\Int(y_i)\rangle_{\Bpre}=0$. 
\end{proof}

Orient the co-cores $\cc(R_i^\partial)$ of the boundary pre-singularities so that $(c(R_i^\partial),\cc(R_i^\partial))$ is an oriented basis 
of $R_i^\partial$. 
\begin{lemma} \label{lemmamatrixW}
 Set $\hat{\epsilon}_i=1$ if $\partial Y_i^\partial$ has a collar neighborhood in $Y_i^\partial$ that lies on the positive side of $R_i^\partial$, $\hat{\epsilon}_i=-1$ otherwise. 
 We have:
 $$\lk(Y_j,x_i^\pm)=\langle Y_j,\cc(R_i^\partial)\rangle_{\Bpre}\ \textrm{ if } i\neq j\quad \textrm{ and } \quad
   \lk(Y_i,x_i^\pm)=\langle -Y_i^\circ,\cc(R_i^\partial)\rangle_{\Bpre}+\rho_i^\pm,$$
 where: 
 $$\rho_i^+=\left\lbrace\begin{array}{c} 0 \\ 1 \end{array}\right. \textrm{ and \ } 
   \rho_i^-=\left\lbrace\begin{array}{c} -1 \\ 0 \end{array}\right.
   \begin{array}{l} \textrm{if } \hat{\epsilon}_i=1, \\ \textrm{otherwise.} \end{array}$$
\end{lemma}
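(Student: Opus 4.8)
The plan is to mirror the strategy of the proof of Lemma \ref{lemmamatrixU}, but now working with the surface $Y_j$ instead of the arc $y_j$, and with the meridian $x_i^\pm$ instead of the torus $X_i^\pm$. The starting point is the same duality: $\lk(Y_j,x_i^\pm)=\langle\Sigma_j,x_i^\pm\rangle_{S^4}$ for any Seifert-type $3$--chain $\Sigma_j$ in $S^4$ bounded by $Y_j$. I would take for $\Sigma_j$ the push-off of the half-space region cut out in $\Bpre$ by $Y_j=Y_j^\partial\sqcup(-Y_j^\circ)$: concretely, $Y_j^\partial$ bounds (together with a piece of $\partial\Bpre$) a sub-ball $B(R_j^\partial)$, and $Y_j^\circ$ bounds a disk-bundle neighborhood of the core disk of $R_j^\circ$; their difference, suitably co-oriented, gives a $3$--chain in $\Bpre\subset S^4$ whose algebraic intersection with a curve $\gamma$ transverse to $\Bpre$ equals $\langle Y_j^\partial-Y_j^\circ,\gamma\rangle_{\Bpre}$ up to the sign prescribed by the co-orientation convention. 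Since $x_i$ is a meridian of $c(R_i^\circ)$, it meets $\Bpre$ cleanly, and the intersection $\langle\Sigma_j,x_i\rangle$ is computed entirely inside $\Bpre$. This is exactly the content of the first displayed formula once $i\neq j$: then $x_i$ is disjoint from $R_i^\partial$'s companion data at $R_j$, no boundary-collar correction occurs, and one reads off $\lk(Y_j,x_i^\pm)=\langle Y_j,\cc(R_i^\partial)\rangle_{\Bpre}$, noting that $x_i$ in $\Bpre$ can be isotoped to coincide with $\cc(R_i^\partial)$ pushed slightly off $R_i^\partial$ (both are meridians of $c(R_i^\circ)$ linking it once, by the chosen orientations).

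For the diagonal term $i=j$ the situation splits into two contributions, exactly as in Lemma \ref{lemmamatrixU}. The ``generic'' part $\langle -Y_i^\circ,\cc(R_i^\partial)\rangle_{\Bpre}$ comes from the $Y_i^\circ$ piece of $Y_i$, which sits near the interior pre-singularity and meets $\cc(R_i^\partial)$ transversally in $\Bpre$; the sign $-$ is built into the decomposition $Y_i=Y_i^\partial\sqcup(-Y_i^\circ)$. The part $\rho_i^\pm$ is a purely local correction coming from the piece $Y_i^\partial$, which is a disk properly embedded in $B(R_i^\partial)$ with $\partial Y_i^\partial=c(R_i^\partial)$ lying on the singularity: near $R_i^\partial$, the curve $x_i^\pm$ (the push-off of the meridian $x_i$ to the $\pm$ side of $\Sigma$) either does or does not cross $Y_i^\partial$, depending on which side of $R_i^\partial$ a collar of $\partial Y_i^\partial$ lies, and this is precisely the bookkeeping recorded by $\hat\epsilon_i$. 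Running through the local model (the $t<0$, $t=0$, $t>0$ slices of Figure \ref{figcalculsigne}, with the arrow at $R_i^\partial$ fixing the negative normal to $B_{R_i}^\circ$) one checks that when $\hat\epsilon_i=1$ the $+$ push-off picks up nothing and the $-$ push-off picks up $-1$, while the roles reverse when $\hat\epsilon_i=-1$; this gives the stated table for $\rho_i^\pm$.

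The main obstacle — and the step deserving the most care — is the sign bookkeeping at the two distinguished points where $\partial Y_i$ meets the singularity, i.e.\ along $c(R_i^\partial)$ and $c(R_i^\circ)$. One must check that the two local contributions are correctly apportioned between $\langle -Y_i^\circ,\cc(R_i^\partial)\rangle_{\Bpre}$ and $\rho_i^\pm$ without double counting, using the orientation conventions fixed just before the lemma (co-cores oriented so that $(c(R_i^\partial),\cc(R_i^\partial))$ is an oriented basis of $R_i^\partial$, cores of singularities oriented, and $x_i$ oriented with $\lk_{\Bpre}(x_i,c(R_i^\circ))=1$) together with the ``outward normal first'' and induced co-orientation conventions from the introduction. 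Concretely I would verify: (i) that in $\Sigma$ the push-off $x_i^\pm$ corresponds in $\Bpre$ to a meridian of $R_i^\circ$ together with a short arc running along $R_i^\partial$, so the only new intersection with $Y_i$ relative to the $i\neq j$ case occurs in a neighborhood of $\partial Y_i^\partial\subset R_i^\partial$; (ii) that the sign of that single possible intersection is governed by $\hat\epsilon_i$ and by whether we pushed to the $+$ or $-$ side; and (iii) that the overall sign in $\lk(Y_j,x_i^\pm)=\pm\langle Y_j,\cc(R_i^\partial)\rangle_{\Bpre}$ is $+$, not $-$, by comparing co-orientations of $Y_j$ in $S^4$ with the co-orientation of $R_i^\partial$ in $\Bpre$. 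Once these local sign computations are pinned down via the figures, the two displayed formulas follow immediately.
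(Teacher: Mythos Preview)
Your overall plan of dualizing Lemma~\ref{lemmamatrixU} is reasonable, but there is a genuine gap at the step where you pass from $x_i$ to $\cc(R_i^\partial)$. You assert that ``$x_i$ in $\Bpre$ can be isotoped to coincide with $\cc(R_i^\partial)$ pushed slightly off $R_i^\partial$ (both are meridians of $c(R_i^\circ)$ linking it once).'' This is false as stated: in $\Bpre$ the meridian $x_i$ is a \emph{closed} curve encircling the \emph{interior} pre-singularity $R_i^\circ$, whereas $\cc(R_i^\partial)$ is an \emph{arc} lying on the \emph{boundary} pre-singularity $R_i^\partial$. They sit near distinct pre-singularities of $\Bpre$ and are not even the same type of $1$--manifold; the co-core is certainly not a meridian of $c(R_i^\circ)$. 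The link between these two objects only appears after passing to $S^4$ via the immersion $h$, and it is mediated not by an isotopy of curves but by a $2$--disk.

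This is exactly what the paper exploits by bounding the \emph{curve} $x_i$ rather than the surface $Y_j$: the meridian $x_i$ bounds a meridian disk $D_i$ of the solid torus $B_{R_i}^\circ$ in $S^4$, transverse to $\Sigma$, and one computes $\lk(Y_j^\pm,x_i)=-\langle D_i,Y_j^\pm\rangle_{S^4}$. Each intersection point of $D_i$ with $Y_j^\pm$ arises where $Y_j$ passes through the boundary leaf $B_{R_i}^\partial$; pulled back to $\Bpre$ this is an essential curve of $R_i^\partial\cap Y_j$, hence a single signed intersection of $Y_j$ with $\cc(R_i^\partial)$. That is the mechanism producing $\langle Y_j,\cc(R_i^\partial)\rangle_{\Bpre}$. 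Your bounding-$Y_j$ approach does not supply it: even granting a $3$--chain $\Sigma_j$ in $h(\Bpre)$ with $\partial\Sigma_j=Y_j$ modulo pieces on $K$, the intersections $\Sigma_j\cap x_i^\pm$ occur near $R_i^\circ$ rather than near $R_i^\partial$, and you give no argument translating them into co-core intersections. The diagonal correction $\rho_i^\pm$ inherits the same problem, since in the paper's picture it records whether $D_i$ meets the collar of $\partial Y_i^\partial$ on the $+$ or $-$ side of $R_i^\partial$, a phenomenon not visible from your setup.
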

\begin{proof}
 The curve $x_i$ is the boundary of a disk $D_i$ transverse to $\Sigma$ and isotopic to a meridian disk 
 of $B_{R_i}^\circ$. For an embedded surface $\Gamma$, disjoint from $x_i$ and transverse to $D_i$, we have 
 $\lk(\Gamma,x_i)=-\lk(x_i,\Gamma)=-\langle D_i,\Gamma\rangle_{S^4}$.
 If $\Gamma=Y_j^\pm$, an intersection point between $D_i$ and $\Gamma$ corresponds to an essential curve of $R_i^\partial$ in the intersection 
 $R_i^\partial\cap Y_j$. Checking the orientation conventions, one gets for $j\neq i$:
 $$\lk(Y_j,x_i^\pm)=\langle Y_j,\cc(R_i^\partial)\rangle_{\Bpre}.$$
 Once again, the case $j=i$ requires special attention for the boundary of $Y_i\partial$. This curve corresponds to an intersection point of the disk $D_i$ with either $Y_i^+$ or $Y_i^-$. By convention, the orientation of $Y_i^\partial$ is given near its boundary by first the direction pointing toward $T(R_i)$ and second the direction of $c(R_i^\partial)$. If the surface $Y_i^\partial$ lies on the positive side (resp. negative side) of $R_i^\partial$, 
 the contribution of this curve to $\lk(Y_i,x_i^+)$ is $0$ (resp. $1$) and its contribution to $\lk(Y_i,x_i^-)$ is $-1$ (resp. $0$). 
 Note that $\langle \Int(Y_i^\partial),\cc(R_i^\partial)\rangle_{\Bpre}=0$. 
\end{proof}

\begin{example} With the preimage ball on the left part of Figure \ref{figcalculsigne}, the associated Seifert matrices are
$V_+=\begin{pmatrix} 0 & \begin{array}{cc} 0 & 0 \\ 0 & -1 \end{array} \\
      \begin{array}{cc} 1 & -1 \\ 1 & 1 \end{array} & \star \end{pmatrix}$
and 
$V_+=\begin{pmatrix} 0 & \begin{array}{cc} 1 & 0 \\ 0 & 0 \end{array} \\
      \begin{array}{cc} 0 & -1 \\ 1 & 0 \end{array} & \star \end{pmatrix}$.
\end{example}

\section{Factorization of the Alexander polynomial}

In this section, we introduce some conditions on A--ribbon 2--knots that ensure the factorization property of the Alexander polynomial. 

  \subsection{Spun knots and concentricity}

Let us recall a construction of 2--knots from classical knots, first introduced by Artin \cite{Artin}. Consider a 3--dimensional half-space $\R^3_+\subset\R^4\subset S^4$. Let $\kappa$ be 
an arc embedded in $\R^3_+$ with endpoints in $\R^2=\partial\R^3_+$. Rotating around $\R^2$, this arc describes a 2--knot $K$ called the spun 
of the 1--knot $\bar{\kappa}$ obtained from $\kappa$ by joining its endpoints with an arc embedded in $\R^2$. Any Seifert matrix of $\bar{\kappa}$ 
is a Seifert matrix of $K$. In particular, $\bar{\kappa}$ and $K$ have the same Alexander polynomial. It follows that the spun of a ribbon 1--knot 
always has the factorization property. Moreover, if $\bar{\kappa}$ is a ribbon 1--knot, then its ribbon disk can also be rotated around $\R^2$, providing 
an A--ribbon 3--ball for $K$. Hence the spun of a ribbon 1--knot is an A--ribbon 2--knot.

We now introduce a condition on an A--ribbon 2--knot that ensures the factorization property of the Alexander polynomial and is satisfied in particular 
by the spuns of ribbon 1--knots. Let $\Brib$ be an A--ribbon 3--ball with singularities $R_i$, $i=1,\dots,n$. Let $\Bpre$ be a 3-ball preimage of $\Brib$. 
An {\em essential arc} in $\Bpre$ is an arc $\eta$ such that $\partial\eta=\eta\cap\partial \Bpre$ and $\eta$ is disjoint from the pre-singularities. 
Define the closure $\bar{\eta}$ of $\eta$ by joining its endpoints with an arc embedded in $\partial\Bpre$. 
For a pre-singularity $R_i^\star$ in $\Bpre$ whose core is oriented, where $\star$ stands for $\circ$ or $\partial$, define the {\em linking number} $\lk(R_i^\star,\eta)$ as the linking number in $\Bpre$ of the core of $R_i^\star$ with $\bar{\eta}$. 
Given an orientation of the cores of the singularities $R_i$, fix the corresponding orientations for the cores of the $R_i^\star$. 
The A--ribbon 3--ball $\Brib$ satisfies the {\em concentricity condition} if there is an orientation of the cores 
of the $R_i$ and an essential arc $\eta$ such that $\lk(R_i^\star,\eta)=1$ for all $i$ and all $\star$, and if the linking matrix of the pre-singularities is trivial. 
An A--ribbon 2--knot satisfies the concentricity condition if it bounds an A--ribbon 3--ball that satisfies it. 
\begin{lemma}
 The spun of a ribbon 1--knot satisfies the concentricity condition.
\end{lemma}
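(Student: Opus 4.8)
The plan is to obtain both the A--ribbon $3$--ball and the essential arc by direct inspection of the rotation defining the spun knot.

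Write $\bar{\kappa}=\kappa\cup\alpha$ with $\kappa\subset\R^3_+$ an arc with $\partial\kappa\subset\R^2$ and $\alpha\subset\R^2$. Fix a ribbon disk $D$ for $\bar{\kappa}$, pushed so that $D\subset\R^3_+$ and $D\cap\R^2=\alpha$; after an isotopy we may moreover assume that the singular arcs $r_i$, $i=1,\dots,n$, of $D$ are disjoint from $\R^2$ (their interiors already are, and their endpoints on $\partial D$ can be slid onto $\kappa$). Then in the domain disk $\hat D$ of the ribbon immersion the pre-singularities $\hat r_i^\partial$ (a properly embedded arc) and $\hat r_i^\circ$ (an interior arc) are disjoint from the boundary arc $\hat\alpha\subset\partial\hat D$ lying over $\alpha$. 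As recalled above, rotating $D$ around $\R^2$ produces an A--ribbon $3$--ball $\Brib$ for $K$; concretely a preimage ball $\Bpre$ of $\Brib$ is obtained by rotating $\hat D$ around $\hat\alpha$. Thus $\Bpre$ is a $3$--ball in which $\hat\alpha$ becomes a properly embedded arc --- the rotation axis --- the singularities are the annuli $R_i=r_i\times S^1$, and the pre-singularities $R_i^\partial$ and $R_i^\circ$ are the annuli swept out by $\hat r_i^\partial$ and $\hat r_i^\circ$; in particular the cores $c(R_i^\partial)$ and $c(R_i^\circ)$ are the circles swept by the two preimages in $\hat D$ of a point $m_i\in c(R_i)$.

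Since the pre-singularities miss $\hat\alpha$, their rotations miss the axis, so $\eta:=\hat\alpha$ is an essential arc in $\Bpre$; fix an orientation of $\hat\alpha$ and of its closure $\bar\eta$. Let $c=c(R_i^\star)$ be the core of a pre-singularity, i.e. the circle swept by a point $\hat m\in\hat D$. Choose an embedded arc $\delta\subset\hat D$ from $\hat m$ to a point of $\hat\alpha$, meeting $\hat\alpha$ only at that endpoint and transversally there; rotating $\delta$ around $\hat\alpha$ sweeps an embedded disk $\Delta\subset\Bpre$ with $\partial\Delta=c$, disjoint from $\partial\Bpre$, and meeting $\hat\alpha$ transversally in a single point. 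Hence
$$\lk(R_i^\star,\eta)=\lk_\Bpre(c,\bar\eta)=\langle\Delta,\hat\alpha\rangle_{\Bpre}=\pm1,$$
the closing arc of $\bar\eta$ in $\partial\Bpre$ contributing nothing since $\Delta\cap\partial\Bpre=\emptyset$. Choosing $\delta$ to avoid the finitely many points of $\hat D$ whose rotations are the other cores shows in the same way that $\lk_\Bpre(c(R_i^\star),c(R_j^{\star'}))=0$ whenever $(i,\star)\neq(j,\star')$, so the linking matrix of the pre-singularities vanishes.

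It remains to fix signs. Orienting $c(R_i)$ orients $c(R_i^\partial)$ and $c(R_i^\circ)$ by the same sense of rotation about $\hat\alpha$, and the sign of the single point of $\Delta\cap\hat\alpha$ above depends only on that sense of rotation relative to the fixed orientation of $\hat\alpha$; hence $\lk(R_i^\partial,\eta)=\lk(R_i^\circ,\eta)=:\varepsilon_i\in\{\pm1\}$, and reversing the orientation of $c(R_i)$ flips $\varepsilon_i$. Orienting each $c(R_i)$ so that $\varepsilon_i=1$, we get $\lk(R_i^\star,\eta)=1$ for all $i$ and all $\star\in\{\partial,\circ\}$, together with the triviality of the linking matrix of the pre-singularities: this is exactly the concentricity condition. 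The only delicate point is the bookkeeping of the first two paragraphs --- identifying $\Bpre$, the singularities, and the cores $c(R_i^\star)$ with explicit objects in the rotated disk, and in particular checking that the rotation axis $\hat\alpha$ is a properly embedded arc of $\Bpre$ disjoint from all pre-singularities; once this local picture is in place, every linking number above is just the signed count of intersections of a rotated arc with that axis.
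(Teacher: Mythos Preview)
Your proof is correct and follows essentially the same approach as the paper's: both take the rotation axis as the essential arc $\eta$ and verify the required linkings by rotating short arcs in the domain disk $\hat D$ to produce disks in $\Bpre$. You are more explicit than the paper (which simply says the first part is ``easily checked'' and then constructs the disjoint disks for the interior pre-singularities by spinning paths drawn in the image $D$ and lifting), and you work directly in the preimage ball throughout rather than in $\Brib$ followed by a lift, but the underlying geometric idea is identical.
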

\begin{proof}
 Let $K$ be the spun of a ribbon 1--knot $\bar{\kappa}$ as in the above definition. Take a ribbon disk $\Drib$ of $\bar{\kappa}$. 
 Let $\Brib$ be the A--ribbon 3--ball obtained from $\Drib$. Define the essential arc $\eta$ as the arc used in the definition of spun knots to define $\bar{\kappa}$ from $\kappa$. The first part of the concentricity condition is easily checked. Now, for each singularity of $\Drib$, join the singularity to the arc $\eta$ by a path through its interior leaf; choose all these paths disjoint and with interiors disjoint from the singularities. Spinning these paths provides disjoint embedded disks in $\Brib$ bounded by the cores of the singularities, meeting them along their interior leaves. These disks lift in the preimage ball $\Bpre$ as disjoint 
 embedded disks bounded by the cores of the interior pre-singularities. 
\end{proof}

We will see in the next subsection that the concentricity condition implies the factorization property.

  \subsection{Singularities position} \label{subseclink}

We now introduce the characterization announced in Section \ref{secseifert}.
We begin with some definitions. Let $\Brib$ be an A--ribbon 3--ball with singularities $(R_i)_{1\leq i\leq n}$. Let $\Bpre$ be a preimage of $\Brib$. 
Orient the cores of the singularities $R_i$ and fix the corresponding orientations for the cores of the $R_i^\star$. 
Fix a boundary pre-singularity $R_i^\partial$. Define the {\em linking of the pre-singularity $R_j^\star$ with respect to $R_i^\partial$} as:
$$\ell_i(R_j^\star)=\left\lbrace\begin{array}{c l}
                  1 & \textrm{if } R_j^\star\subset B(R_i) \\
                  k & \textrm{if } R_j^\star\subset T(R_i) \textrm{ and } c(R_j^\star)=k\,c(R_i^\partial) \textrm{ in } H_1(T(R_i))
                  \end{array}\right..$$

The A--ribbon ball $\Brib$ with oriented singularities satisfies the {\em linkings condition} if $$\ell_i(R_j^\partial)=\ell_i(R_j^\circ)$$ for all $i,j$ 
and if the linking matrix of the pre-singularities is trivial. 
By extension, we say that an A--ribbon 2--knot satisfies the linkings condition if it bounds an A--ribbon 3--ball that satisfies this condition for given 
orientations of its singularities. Note that the concentricity condition implies the linkings condition. 

\begin{proposition} \label{proplinkcond}
 Let $\Brib$ be an A--ribbon 3--ball with oriented singularities $(R_i)_{1\leq i\leq n}$. Assume the linking matrix of the pre-singularities is trivial. 
 Let $\Sigma$ be the associated Seifert hypersurface and let $V_\pm=\begin{pmatrix} 0 & U_\pm \\ W_\pm & * \end{pmatrix}$ be associated Seifert matrices. Then $\Brib$ satisfies the linkings condition 
 if and only if $W_\pm=\tr{U_\mp}$.
\end{proposition}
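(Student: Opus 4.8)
The plan is to prove the single identity
$$(W_\pm)_{ij}-(U_\mp)_{ji}=\pm\big(\ell_j(R_i^\circ)-\ell_j(R_i^\partial)\big)$$
for all indices $i,j$ and for either choice of sign, with the convention that $\ell_i(R_i^\partial)=1$ on the diagonal; the sign on the right may depend on $i,j$ but is unimportant. Granting this identity, $W_\pm=\tr{U_\mp}$ holds if and only if its left-hand side vanishes for all $i,j$, i.e.\ if and only if $\ell_j(R_i^\circ)=\ell_j(R_i^\partial)$ for all $i,j$, which under the standing hypothesis that the linking matrix of the pre-singularities is trivial is exactly the linkings condition. This yields both implications.

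The first step is to rewrite the two sides as algebraic intersection numbers in $\Bpre$ by means of Lemmas \ref{lemmamatrixU} and \ref{lemmamatrixW}. For $i\neq j$ these give $(W_\pm)_{ij}=\langle Y_i,\cc(R_j^\partial)\rangle_{\Bpre}$ and $(U_\mp)_{ji}=\langle R_j^\partial,y_i\rangle_{\Bpre}$, and for $i=j$ they give $(W_\pm)_{ii}=\langle -Y_i^\circ,\cc(R_i^\partial)\rangle_{\Bpre}+\rho_i^\pm$ and $(U_\mp)_{ii}=\lk(X_i,y_i^\mp)$, the last being the explicit value $0$ or $\pm1$ determined by $\epsilon_i$. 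The key computation is that of the numbers $\langle Y_i^\star,\cc(R_j^\partial)\rangle_{\Bpre}$ with $\star\in\{\partial,\circ\}$: I would cut $\Bpre$ along the separating pre-singularity $R_j^\partial$ into the ball $B(R_j)$ and the integral homology torus $T(R_j)$, note that $c(R_i^\star)=\partial Y_i^\star$ lies entirely in one of the two pieces, and observe that the part of $Y_i^\star$ lying in $T(R_j)$ has boundary made of $c(R_i^\star)$ (if $R_i^\star\subset T(R_j)$) together with the essential curves of $Y_i^\star\cap R_j^\partial$, which are parallel copies of $c(R_j^\partial)$, plus curves bounding disks in $R_j^\partial$. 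Since this boundary is null-homologous in $H_1(T(R_j))\cong\Z$, in which $[c(R_i^\star)]=\ell_j(R_i^\star)[c(R_j^\partial)]$ when $R_i^\star\subset T(R_j)$, the signed count of essential curves, which is $\pm\langle Y_i^\star,\cc(R_j^\partial)\rangle_{\Bpre}$, equals $\ell_j(R_i^\star)$ times the indicator of the condition $R_i^\star\subset T(R_j)$. Using $\ell_j(R_i^\star)=1$ whenever $R_i^\star\subset B(R_j)$, this indicator-weighted quantity differs from $\ell_j(R_i^\star)$ precisely by the indicator of $R_i^\star\subset B(R_j)$; adding the $Y_i^\partial$ and $-Y_i^\circ$ contributions then expresses $\langle Y_i,\cc(R_j^\partial)\rangle_{\Bpre}$ as $\pm(\ell_j(R_i^\circ)-\ell_j(R_i^\partial))$ plus $\pm$ the difference of the indicators of $R_i^\circ$ and $R_i^\partial$ lying in $T(R_j)$. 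On the other hand $\langle R_j^\partial,y_i\rangle_{\Bpre}$, the intersection of the separating surface $R_j^\partial$ with an arc going from a point of $R_i^\partial$ to a point of $R_i^\circ$, is exactly $\pm$ that same difference of indicators. Checking that the co-orientation conventions — the arrow at each singularity that orients $R_j^\partial$, together with the orientation of $\cc(R_j^\partial)$ making $(c(R_j^\partial),\cc(R_j^\partial))$ a positive basis — make these two signs agree, the off-diagonal case of the identity drops out.

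For the diagonal, the same cutting procedure applied to $R_i^\partial$ (legitimate because $\partial Y_i^\circ=c(R_i^\circ)$ is disjoint from $R_i^\partial$) computes $\langle -Y_i^\circ,\cc(R_i^\partial)\rangle_{\Bpre}$ in terms of $\ell_i(R_i^\circ)$ and of whether $R_i^\circ\subset T(R_i)$. One then combines this with the explicit values of $\rho_i^\pm$, governed by the sign $\hat\epsilon_i$ that records which side of $R_i^\partial$ contains the ball $B(R_i)$, and of $\lk(X_i,y_i^\mp)$, governed by $\epsilon_i$. The point is that the arc $y_i$ meets $R_i^\partial$ only at an endpoint and leaves it toward $R_i^\circ$, so that $\epsilon_i=\hat\epsilon_i$ if $R_i^\circ\subset T(R_i)$ and $\epsilon_i=-\hat\epsilon_i$ if $R_i^\circ\subset B(R_i)$; together with the sign coming from the cutting argument, a short case distinction on the position of $R_i^\circ$ yields the diagonal case of the identity with $\ell_i(R_i^\partial)$ interpreted as $1$.

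The genuinely delicate part, and the main obstacle, is the sign bookkeeping that pervades the whole argument: one must keep track of the paper's orientation conventions (outward-normal-first boundaries, the arrow fixing the co-orientation of each $R_i^\partial$, the resulting orientations of the cores and co-cores of the pre-singularities, and the signs $\epsilon_i$ and $\hat\epsilon_i$) carefully enough to be certain that the sign attached to $\langle Y_i,\cc(R_j^\partial)\rangle_{\Bpre}$ really agrees with the one attached to $\langle R_j^\partial,y_i\rangle_{\Bpre}$, and that on the diagonal the contributions of $\rho_i^\pm$, of $\epsilon_i$, and of the cutting argument combine with the correct signs. Once these sign matchings are secured, the displayed identity — hence the proposition — is immediate.
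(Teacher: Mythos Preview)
Your approach is essentially the same as the paper's. The paper isolates the two key computations as separate statements: Lemma~\ref{lemmaYcci} gives $\langle Y_j^\star,\cc(R_i^\partial)\rangle_{\Bpre}$ in terms of $\varepsilon_i\,\ell_i(R_j^\star)$ via exactly your cutting-along-$R_i^\partial$ argument, and Lemma~\ref{lemmaRy} computes $\langle R_i^\partial,y_j\rangle_{\Bpre}$ as the difference of indicators you describe; the proof of the proposition then simply compares these two via Lemmas~\ref{lemmamatrixU} and~\ref{lemmamatrixW}. Your organizing identity $(W_\pm)_{ij}-(U_\mp)_{ji}=\pm(\ell_j(R_i^\circ)-\ell_j(R_i^\partial))$ is correct (the sign is $-\varepsilon_j$), and your diagonal analysis relating $\epsilon_i$ and $\hat\epsilon_i$ to the position of $R_i^\circ$ is the right way to unpack the paper's terse ``conclude using the same lemmas''; the paper leaves this implicit, relying on $\hat\epsilon_i=-\varepsilon_i$ (since $Y_i^\partial\subset B(R_i)$) together with the case distinction in Lemma~\ref{lemmaYcci}.
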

\begin{corollary}\label{corfacto}
 If an A--ribbon 2--knot satisfies the linkings condition, then it has the factorization property.
\end{corollary}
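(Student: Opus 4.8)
The plan is to deduce the corollary directly from Proposition~\ref{proplinkcond}, Levine's presentation (Proposition~\ref{propLevine}), and the computation of $H_1(\Sigma)$ made in Section~\ref{secseifert}. First I would unwind the hypothesis: if the A--ribbon $2$--knot $K$ satisfies the linkings condition, then $K$ bounds an A--ribbon $3$--ball $\Brib$ with oriented singularities $R_1,\dots,R_n$ satisfying that condition, so in particular the linking matrix $\Lk_S(\Brib)$ of the pre-singularities is trivial. Plugging $\Lk_S(\Brib)=0$ into the formula $H_1(\Sigma)\cong\Z^n\oplus\bigl(\Z^n/\Lk_S(\Brib)\Z^n\bigr)$ from Section~\ref{secseifert}, where $\Sigma$ is the Seifert hypersurface associated with $\Brib$, shows that $H_1(\Sigma)\cong\Z^{2n}$ is torsion-free. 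Hence Proposition~\ref{propLevine} applies and gives $\Delta_K(t)=\det(tV_+-V_-)$, with $V_\pm=\begin{pmatrix}0 & U_\pm\\ W_\pm & *\end{pmatrix}$ and $U_\pm$, $W_\pm$ the $n\times n$ blocks defined in Section~\ref{secseifert}. Finally, since $\Brib$ satisfies the linkings condition, Proposition~\ref{proplinkcond} yields $W_\pm=\tr{U_\mp}$.

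It then remains to run the determinant computation. With $W_\pm=\tr{U_\mp}$ one has
$$tV_+-V_-=\begin{pmatrix}0 & tU_+-U_- \\ \tr{(tU_--U_+)} & *\end{pmatrix},$$
and expanding the determinant of a $2\times 2$ block matrix whose top-left $n\times n$ block vanishes gives $\det(tV_+-V_-)=(-1)^n\det(tU_+-U_-)\,\det(tU_--U_+)$, using $\det\bigl(\tr{(tU_--U_+)}\bigr)=\det(tU_--U_+)$. Set $f(t)=\det(tU_+-U_-)\in\Zt$. Substituting $t\mapsto t^{-1}$ and clearing denominators, one checks $\det(tU_--U_+)=(-1)^n t^n f(t^{-1})$, so that $\Delta_K(t)=t^n f(t)f(t^{-1})$. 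As $t^n$ is a unit of $\Zt$, this is the factorization property.

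I do not expect a genuine obstacle here: the corollary is essentially a formal consequence of Proposition~\ref{proplinkcond}, which carries the geometric content. The points deserving care are purely bookkeeping: that $\Lk_S(\Brib)=0$ is exactly what makes $H_1(\Sigma)$ torsion-free, so that the integral form of Levine's presentation is at our disposal and not merely its rational counterpart; and the signs and the power of $t$ in the block-determinant identity. It is also worth stating explicitly --- in line with the introduction, where $\Delta_K$ of a $2$--knot need not even be symmetric --- that the factorization holds only up to the unit $t^n$, which is why $\Delta_K(t)$ comes out as $t^n f(t)f(t^{-1})$ rather than literally of the form $g(t)g(t^{-1})$.
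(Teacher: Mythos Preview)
Your proposal is correct and is exactly the argument the paper leaves implicit: the corollary is stated without proof right after Proposition~\ref{proplinkcond}, and the intended derivation is precisely the combination of $\Lk_S(\Brib)=0\Rightarrow H_1(\Sigma)$ torsion-free, Proposition~\ref{propLevine}, Proposition~\ref{proplinkcond}, and the block-determinant identity you spell out. Your bookkeeping on signs and the unit $t^n$ is accurate.
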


Recall the surface $Y_j$ was defined as $Y_j=Y_j^\partial\sqcup (-Y_j^\circ)$ with $\partial Y_j^\partial=c(R_j^\partial)$ and
$\partial Y_j^\circ=c(R_j^\circ)$. We have 
$\langle Y_j,\cc(R_i^\partial)\rangle=\langle Y_j^\partial,\cc(R_i^\partial)\rangle-\langle Y_j^\circ,\cc(R_i^\partial)\rangle$ 
and the next lemma gives the two terms in terms of the linkings $\ell_i$ and the $\varepsilon_i$ defined by $\varepsilon_i=-1$ if the positive 
normal to $R_i^\partial$ points toward $B(R_i)$ and $\varepsilon_i=1$ otherwise.
\begin{lemma} \label{lemmaYcci}
 For $Y_j^\star\neq Y_i^\partial$:
 $$\langle Y_j^\star,\cc(R_i^\partial)\rangle_\Bpre=\left\lbrace\begin{array}{c l}
  0 & \textrm{if } R_j^\star\subset B(R_i) \\
  \varepsilon_i\,\ell_i(R_j^\star) & \textrm{if } R_j^\star\subset T(R_i)\phantom{\textrm{\Large{R}}}
 \end{array}\right.$$
\end{lemma}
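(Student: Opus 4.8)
The plan is to analyze the intersection of the surface $Y_j^\star$ with the co-core $\cc(R_i^\partial)$ directly inside the preimage ball $\Bpre$, using the decomposition $\Bpre = B(R_i)\cup_{R_i^\partial} T(R_i)$ together with the fact that $\cc(R_i^\partial)$ is an arc on $R_i^\partial$ joining its two boundary circles. First I would dispose of the case $R_j^\star\subset B(R_i)$: here $\partial Y_j^\star=c(R_j^\star)$ lies in the interior of $B(R_i)$ (or on $\partial\Bpre$ through a closure, but the relevant part of $Y_j^\star$ can be taken inside $B(R_i)$), so $Y_j^\star$ can be chosen entirely in $B(R_i)$ away from $R_i^\partial$, hence disjoint from $\cc(R_i^\partial)\subset R_i^\partial$; this forces $\langle Y_j^\star,\cc(R_i^\partial)\rangle_\Bpre=0$. (One must also check that the only possible intersections of $Y_j^\star$ with $R_i^\partial$ coming from pushing pieces around can be removed — this follows from the general position choices made for the $Y_j$'s in Subsection \ref{subseccomputeSeifert}, where the $Y_j^\partial$ are disks properly embedded in $B(R_j^\partial)$ and the $Y_j^\circ$ meet pre-singularities only in essential curves.)

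For the case $R_j^\star\subset T(R_i)$, the point is that $\cc(R_i^\partial)$, being an arc on $R_i^\partial$ from one boundary component to the other, together with an arc in $T(R_i)$ forms (a representative of) a curve dual to $c(R_i^\partial)$ in $T(R_i)$. I would make this precise: since $T(R_i)$ is an integral homology torus with $H_1(T(R_i))\cong\Z$ generated by $[c(R_i^\partial)]$, and $\cc(R_i^\partial)$ meets $c(R_i^\partial)$ in a single point transversely on $R_i^\partial$, a suitable relative class of $\cc(R_i^\partial)$ in $(T(R_i),\partial T(R_i))$ is Lefschetz-dual to $[c(R_i^\partial)]$. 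Therefore for any $2$-cycle (rel boundary) $Z$ in $T(R_i)$ one has $\langle Z,\cc(R_i^\partial)\rangle_{T(R_i)} = [\partial Z]\cdot(\text{generator})$, i.e.\ it equals the coefficient $k$ such that $[\partial Z]=k\,[c(R_i^\partial)]$ in $H_1(T(R_i))$ — up to an overall sign depending on orientation conventions. Applying this with $Z$ the part of $Y_j^\star$ lying in $T(R_i)$: its boundary (as far as $T(R_i)$ is concerned) is $c(R_j^\star)$, which by definition of $\ell_i$ satisfies $c(R_j^\star)=\ell_i(R_j^\star)\,c(R_i^\partial)$ in $H_1(T(R_i))$. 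This yields $\langle Y_j^\star,\cc(R_i^\partial)\rangle_\Bpre = \pm\,\ell_i(R_j^\star)$.

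It then remains to pin down the sign and show it is exactly $\varepsilon_i$, where $\varepsilon_i=-1$ if the positive normal to $R_i^\partial$ points toward $B(R_i)$ and $\varepsilon_i=+1$ otherwise. This is where I expect the only real bookkeeping: one must track the "outward normal first" and co-orientation conventions fixed in the Conventions section, the orientation of $\cc(R_i^\partial)$ chosen so that $(c(R_i^\partial),\cc(R_i^\partial))$ is an oriented basis of $R_i^\partial$, the chosen orientation of $R_i^\partial$ (whose positive normal is prescribed by the arrow data in Subsection \ref{subseccomputeSeifert}), and the ambient orientation of $\Bpre$. Concretely, I would compute $\langle Y_j^\star,\cc(R_i^\partial)\rangle_\Bpre$ at a single transverse intersection point lying on $R_i^\partial$, building an oriented basis of $T_x\Bpre$ from a normal basis to $Y_j^\star$ followed by a normal basis to $\cc(R_i^\partial)$, and compare with the sign dictated by the homological duality computation above; the discrepancy is governed precisely by whether the positive normal of $R_i^\partial$ points into $B(R_i)$ or into $T(R_i)$, which is the definition of $\varepsilon_i$. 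The hard part is purely this sign chase; the homological content (the duality between $\cc(R_i^\partial)$ and $c(R_i^\partial)$ in the homology torus $T(R_i)$) is the conceptual core and is straightforward once stated. Combining the two cases gives the displayed formula.
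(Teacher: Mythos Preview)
Your proposal is correct and follows essentially the same approach as the paper: both arguments rest on the observation that $\langle Y_j^\star,\cc(R_i^\partial)\rangle_\Bpre$ depends only on the homology class of $\partial Y_j^\star=c(R_j^\star)$, dispose of the $B(R_i)$ case by pushing $Y_j^\star$ off $R_i^\partial$, and in the $T(R_i)$ case reduce to $\ell_i(R_j^\star)$ times a single sign. The only cosmetic difference is that the paper names a concrete disk $D\subset\Bpre$ with $\partial D$ a push-off of $c(R_i^\partial)$ into $T(R_i)$ to normalize this sign as $\langle D,\cc(R_i^\partial)\rangle=\varepsilon_i$, whereas you phrase the same step via Lefschetz duality in the homology torus $T(R_i)$.
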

\begin{proof}
 This algebraic intersection only depends on the boundary of $Y_j^\star$. If $R_j^\star\subset B(R_i)$, the surface $Y_j^\star$ can be chosen 
 in the interior of $B(R_i)$ and the result follows. Assume $R_j^\star\subset T(R_i)$. Consider a disk $D\subset\Bpre$ that intersects 
 $R_i^\partial$ transversely along a single simple closed curve isotopic to $c(R_i^\partial)$, whose oriented boundary is a push-off of $c(R_i^\partial)$ 
 in the direction of $T(R_i)$. We have $\langle Y_j^\star,\cc(R_i^\partial)\rangle=\ell_i(R_j^\star)\langle D,\cc(R_i^\partial)\rangle$.
\end{proof}

\begin{lemma} \label{lemmaRy}
For $i\neq j$:
 $$\langle R_i^\partial,y_j\rangle_\Bpre=\left\lbrace
                      \begin{array}{c l}
                       0 & \textrm{if } R_j^\partial,R_j^\circ\subset B(R_i) \textrm{ or } R_j^\partial,R_j^\circ\subset T(R_i) \\
                       \varepsilon_i & \textrm{if } R_j^\circ\subset B(R_i) \textrm{ and } R_j^\partial\subset T(R_i)\phantom{\textrm{\Large{R}}} \\
                       -\varepsilon_i & \textrm{if } R_j^\partial\subset B(R_i) \textrm{ and } R_j^\circ\subset T(R_i)\phantom{\textrm{\Large{R}}}
                      \end{array}\right.$$
\end{lemma}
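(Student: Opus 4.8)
The plan is to reduce $\langle R_i^\partial,y_j\rangle_\Bpre$ to a statement about the position of the two endpoints of the arc $y_j$ with respect to the separating annulus $R_i^\partial$. Recall that $R_i^\partial$ is a two--sided, properly embedded annulus splitting $\Bpre$ into the ball $B(R_i)$ and the homology torus $T(R_i)$, and that $y_j$ is an arc whose endpoints are the two preimages in $\Bpre$ of a point of the singularity $R_j$, one lying on $R_j^\circ$ and the other on $R_j^\partial$, chosen to meet $R_j^\circ$ and $R_j^\partial$ only at those endpoints. Since $i\neq j$ and the pre--singularities are pairwise disjoint connected annuli, $R_j^\circ$ and $R_j^\partial$ are both disjoint from $R_i^\partial$, hence each lies entirely in $B(R_i)$ or entirely in $T(R_i)$; in particular the two endpoints of $y_j$ avoid $R_i^\partial$.

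The first step is the elementary remark that, for an arc with fixed endpoints disjoint from a properly embedded separating hypersurface, the algebraic intersection number with that hypersurface is invariant under homotopies rel endpoints, and so depends only on which side of $R_i^\partial$ each endpoint lies on: it is $0$ when both endpoints are on the same side and $\pm 1$ otherwise. (Any two arcs of $\Bpre$ with the same endpoints are homotopic rel endpoints, $\Bpre$ being simply connected.) This already yields the first line of the statement, in which $R_j^\circ$ and $R_j^\partial$ lie on the same side of $R_i^\partial$, i.e.\ both in $B(R_i)$ or both in $T(R_i)$.

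It remains to fix the sign in the two mixed cases. Here I would use the orientation of $y_j$ fixed in Subsection~\ref{subseccomputeSeifert} and exploited in the proof of Lemma~\ref{lemmamatrixU}, for which the endpoint of $y_j$ on $R_j^\partial$ is its terminal point. Then in a mixed configuration $y_j$ realizes exactly one algebraically counted passage through $R_i^\partial$: in the direction $B(R_i)\to T(R_i)$ when $R_j^\circ\subset B(R_i)$ and $R_j^\partial\subset T(R_i)$, and in the direction $T(R_i)\to B(R_i)$ when $R_j^\partial\subset B(R_i)$ and $R_j^\circ\subset T(R_i)$. Finally, reading the sign of such a passage off the ambient intersection--sign convention (positive normal of $R_i^\partial$ first, then the oriented normal of the curve) together with the definition of $\varepsilon_i$ ($\varepsilon_i=-1$ exactly when the positive normal of $R_i^\partial$ points toward $B(R_i)$), one gets that a passage in the direction $B(R_i)\to T(R_i)$ contributes $+\varepsilon_i$ and one in the direction $T(R_i)\to B(R_i)$ contributes $-\varepsilon_i$. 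Combining this with the two case distinctions gives the remaining lines of the statement.

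The only subtle point is this last sign computation: one has to juggle simultaneously the chosen orientation of $y_j$ and the ambient convention for algebraic intersection numbers, exactly as in the proofs of Lemmas~\ref{lemmamatrixU} and~\ref{lemmaYcci}; everything else is the standard picture of a ball cut by a properly embedded separating surface.
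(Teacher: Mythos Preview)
Your argument is correct and is exactly the approach the paper takes: the paper's entire proof is the single sentence ``The choice of orientation for $y_j$ implies that it is oriented from $R_j^\circ$ to $R_j^\partial$,'' leaving the separating--surface and sign arguments implicit. You have simply written out those details explicitly, including the homotopy--invariance step and the $\varepsilon_i$ bookkeeping, so there is nothing to correct.
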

\begin{proof}
 The choice of orientation for $y_j$ implies that it is oriented from $R_j^\circ$ to $R_j^\partial$. 
\end{proof}

\begin{proof}[Proof of Proposition \ref{proplinkcond}.]
 For any $i\neq j$, Lemmas \ref{lemmaYcci} and \ref{lemmaRy} imply that $\ell_i(R_j^\partial)=\ell_i(R_j^\circ)$ if and only if 
 $\langle R_i^\partial,y_j\rangle_{\Bpre}=\langle Y_j,\cc(R_i^\partial)\rangle_{\Bpre}$, using the fact that $\ell_i(R_j^\star)=1$ if $R_j^\star\subset B(R_i)$. This concludes thanks to Lemmas \ref{lemmamatrixU} 
 and \ref{lemmamatrixW}. If $i=j$, notice that $\ell_i(R_i^\partial)=1$ and conclude using the same lemmas. 
\end{proof}

  \subsection{Connected sum of a 2--knot with its mirror image}

Denote by $\mir{K}$ the mirror image of a 2--knot $K$ and by $K_1\sharp K_2$ the connected sum of two 2--knots $K_1$ and $K_2$.
\begin{proposition}
 For any 2--knot $K$, the 2--knot $K\sharp\mir{K}$ has the factorization property.
\end{proposition}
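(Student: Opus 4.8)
The plan is to reduce the statement to the already-proven factorization result, Corollary~\ref{corfacto}, by exhibiting an A--ribbon $3$--ball for $K\sharp\mir{K}$ that satisfies the linkings condition. More precisely, I would not try to prove the factorization directly from a Seifert matrix; instead I would produce a suitable geometric model. The first step is to handle the case where $K$ is itself a ribbon $2$--knot. If $K$ bounds a ribbon $3$--ball $\Brib$ (in the usual sense), then $\mir{K}$ bounds the mirror ribbon $3$--ball $\mir{\Brib}$, and one can glue $\Brib$ and $\mir{\Brib}$ along a small disk on each of $K$ and $\mir{K}$ (a disk disjoint from the singularities and from the image of the boundary leaves) to obtain a $3$--ball $B'$ bounded by $K\sharp\mir{K}$. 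The singularities of $B'$ are exactly the old ribbon ($2$--disk) singularities of $\Brib$ together with their mirror copies in $\mir{\Brib}$; each such disk singularity of $\Brib$ and its mirror in $\mir{\Brib}$ can be connected through the gluing region by a tube, merging the two disk singularities into a single annular one. This is precisely the construction alluded to in the introduction, and it produces an A--ribbon $3$--ball for $K\sharp\mir{K}$.

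The second step is to check that this A--ribbon $3$--ball satisfies the linkings condition. The preimage ball $\Bpre'$ is built from two copies of the preimage ball $\Bpre$ of $\Brib$ (one of them mirrored), glued along a ball corresponding to the connected-sum region. For a singularity $R_i$ of $B'$ obtained by tubing a disk singularity $S_i$ of $\Brib$ with its mirror $\mir{S}_i$, the boundary pre-singularity $R_i^\partial$ is the tube connecting $S_i^\partial$ to $\mir{S}_i^\partial$, and the interior pre-singularity $R_i^\circ$ is the tube connecting $S_i^\circ$ to $\mir{S}_i^\circ$; both tubes run parallel through the connected-sum region. The key point is that, because $R_j^\partial$ and $R_j^\circ$ are parallel annuli (co-bounding the product region between $S_j$ and $\mir{S}_j$), the decomposition of $\Bpre'$ induced by $R_i^\partial$ into $B(R_i)\sqcup T(R_i)$ puts $R_j^\partial$ and $R_j^\circ$ on the same side and assigns them the same class in $H_1(T(R_i))$ (or both land in $B(R_i)$). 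Hence $\ell_i(R_j^\partial)=\ell_i(R_j^\circ)$ for all $i,j$. One must also verify the remaining half of the linkings condition, namely that the linking matrix of the interior pre-singularities $c(R_i^\circ)$ in $\Bpre'$ is trivial: since the original ribbon disk singularities of $\Brib$ contribute cores that bound disks in $\Bpre$ (the disk singularities of a ribbon ball can be taken unknotted and unlinked), the tubed cores $c(R_i^\circ)$ still bound disjoint embedded disks in $\Bpre'$, so all these linking numbers vanish. By Corollary~\ref{corfacto}, $K\sharp\mir{K}$ then has the factorization property.

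The third step deals with a general $2$--knot $K$, which need not be ribbon. Here I expect the main obstacle. One approach is to observe that $K\sharp\mir{K}$ always bounds an immersed $3$--ball with only annular ribbon singularities, regardless of whether $K$ is ribbon: take any Seifert hypersurface $\Sigma$ for $K$, form $\Sigma\cup_{D^2}(-\mir{\Sigma})$ — a closed hypersurface — and resolve it suitably; or, more in the spirit of this paper, use the fact that $K\sharp\mir{K}$ is always the boundary of a specific immersed ball obtained by doubling a collar of $\Sigma$. The cleanest route, and the one I would pursue, is to show directly that $K\sharp\mir{K}$ admits a Seifert hypersurface whose Seifert matrix $V$ can be arranged in the block form $V=\left(\begin{smallmatrix}0 & U\\ \tr{U}&*\end{smallmatrix}\right)$ or, better, that its Alexander module is presented by $tV_+-V_-$ with $W_\pm=\tr{U_\mp}$: indeed, if $\Sigma$ has Seifert matrices $V_\pm$ for $K$, then $-\mir{\Sigma}$ has Seifert matrices $-\tr{V_\mp}$ (up to the standard identifications), and boundary-connect-summing the two hypersurfaces gives $K\sharp\mir{K}$ a Seifert hypersurface with block-diagonal Seifert matrices $V_\pm\oplus(-\tr{V_\mp})$; then $tV_+-V_-$ for this hypersurface is $\bigl(tV_+ - V_-\bigr)\oplus\bigl(-t\tr{V_-}+\tr{V_+}\bigr)$, whose determinant is $\Delta_K(t)\cdot\det(\tr{V_+}-t\tr{V_-}) = \Delta_K(t)\cdot\det(V_+ - tV_-) = \Delta_K(t)\,\Delta_K(t^{-1})\,t^{N}$ up to units, exhibiting the factorization with $f(t)\doteq\Delta_K(t)$. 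The hard part is making the identification "$-\mir{\Sigma}$ has Seifert matrices $-\tr{V_\mp}$" precise, since mirroring reverses the ambient orientation and hence swaps positive and negative push-offs while also changing the sign of all linking numbers; once the bookkeeping of orientations and co-orientations (governed by the conventions fixed at the start of the paper) is done carefully, the transpose appears from the symmetry $\lk(X_i,x_j^\pm)\leftrightarrow\lk(x_j,X_i^\pm)$ under the swap, and the proof concludes.
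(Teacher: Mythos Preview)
Your route is far more circuitous than the paper's. The paper obtains the proposition in one line from the lemma
\[
\Delta_{\mir{K}}(t)=\Delta_K(t^{-1}),\qquad \Delta_{K_1\sharp K_2}(t)=\Delta_{K_1}(t)\,\Delta_{K_2}(t),
\]
which immediately gives $\Delta_{K\sharp\mir{K}}(t)=\Delta_K(t)\,\Delta_K(t^{-1})$, the factorization with $f=\Delta_K$. No A--ribbon ball, no linkings condition, no Seifert hypersurface is needed.

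Your first two steps only treat the case where $K$ is \emph{ribbon} and therefore cannot establish the proposition as stated; you acknowledge this yourself at the start of step three. (The paper does sketch, \emph{after} the proposition and purely as an illustration, an A--ribbon ball for $K\sharp\mir{K}$ when $K$ is ribbon --- but via Yanagawa's simply-knotted characterization rather than your tubing construction, and not as a proof of the proposition.)

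Your third step is the only one aimed at a general $K$, and what it actually does is re-derive the lemma above via Seifert matrices. Here there is a genuine gap: Proposition~\ref{propLevine} over $\Z$ requires $H_1(\Sigma)$ to be torsion-free, and by Corollary~\ref{corZtorsion} a $2$--knot whose Alexander module has $\Z$--torsion admits \emph{no} such Seifert hypersurface --- and such $2$--knots exist. You could repair this by working over $\Q$ (Levine's original setting), obtaining the factorization up to units in $\Q[t^{\pm1}]$, and then arguing back to $\Z[t^{\pm1}]$ using $\Delta_K(1)=1$; but at that point it is far cleaner to invoke multiplicativity under connected sum and the mirror formula directly at the level of Alexander modules, which is exactly what the paper does.
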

This result follows from the next lemma. 

\begin{lemma}\ 
 \begin{itemize}
  \item $\Delta_{\mir{K}}(t)=\Delta_K(t^{-1})$
  \item $\Delta_{K_1\sharp K_2}=\Delta_{K_1}\Delta_{K_2}$
 \end{itemize}
\end{lemma}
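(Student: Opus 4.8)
The plan is to deduce each identity from the way the operation changes the Alexander module $\Al(\cdot)=H_1(\tilde X)$, and then to pass to orders.

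\textbf{Mirror image.} Fix an orientation-reversing diffeomorphism $r\colon S^4\to S^4$ with $r(K)=\mir K$; it restricts to a diffeomorphism of exteriors $\bar r\colon X_K\to X_{\mir K}$. Since $r$ reverses the orientation of $S^4$ it negates linking numbers, so $\bar r$ sends the preferred generator of $H_1(X_K)\cong\Z$ (a meridian of linking number $+1$) to \emph{minus} the preferred generator of $H_1(X_{\mir K})$; hence the abelianisation maps satisfy $\pi_{\mir K}\circ\bar r_*=-\pi_K$. Therefore the pullback along $\bar r$ of the infinite cyclic cover of $X_{\mir K}$ is the infinite cyclic cover of $X_K$, and a covering diffeomorphism $\tilde{\bar r}\colon\tilde X_K\to\tilde X_{\mir K}$ conjugates the deck generator $t$ on the source to $t^{-1}$ on the target. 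Thus $\tilde{\bar r}_*\colon\Al(K)\to\Al(\mir K)$ is an additive isomorphism with $\tilde{\bar r}_*(ta)=t^{-1}\tilde{\bar r}_*(a)$; so if $A(t)$ presents $\Al(K)$ over $\Zt$ then $A(t^{-1})$ presents $\Al(\mir K)$, whence $\Delta_{\mir K}(t)=\Delta_K(t^{-1})$ up to a unit $\pm t^k$ — the sign being fixed by $\Delta(1)=1$. (Equivalently, $r$ turns a Seifert hypersurface for $K$ into one for $\mir K$ with Seifert matrices $V_\pm^{\mir K}=-V_\mp^{K}$, so that $\det(tV_+^{\mir K}-V_-^{\mir K})=\det(V_+^{K}-tV_-^{K})\doteq\Delta_K(t^{-1})$ by Proposition~\ref{propLevine}.)

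\textbf{Connected sum.} Write $K:=K_1\sharp K_2\subset S^4=B^4_{(1)}\cup_{S^3}B^4_{(2)}$, where for $i=1,2$ the pair $(B^4_{(i)},K\cap B^4_{(i)})$ is obtained from $(S^4,K_i)$ by deleting a trivial ball-pair and $S^3=B^4_{(1)}\cap B^4_{(2)}$ meets $K$ in an unknotted circle $U$. Let $Y_i$ be the exterior in $B^4_{(i)}$ of the properly embedded $2$--disk $K\cap B^4_{(i)}$ and $W=Y_1\cap Y_2=S^3\setminus N(U)$, a solid torus, so $X_K=Y_1\cup_W Y_2$. Two facts: $Y_i\simeq X_{K_i}$ (capping the removed ball-pair glues onto $Y_i$ the exterior of a trivial disk, attached along $W$ by a homotopy equivalence), and the core of $W$ is a meridian of $K$, so the infinite cyclic cover $\tilde W$ is the universal cover $D^2\times\R$ of $W$, hence contractible. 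Feeding this into the Mayer--Vietoris sequence of $\tilde X_K=\tilde Y_1\cup_{\tilde W}\tilde Y_2$ over $\Zt$, and using $H_{\ge 1}(\tilde W)=0$ together with the injectivity of $H_0(\tilde W)\to H_0(\tilde Y_1)\oplus H_0(\tilde Y_2)$ (all three spaces being connected), the sequence collapses to a $\Zt$--module isomorphism $\Al(K)\cong\Al(K_1)\oplus\Al(K_2)$. Since $\Delta_\bullet$ is the order of the Alexander module and the order is multiplicative over direct sums of finitely generated torsion $\Zt$--modules, $\Delta_{K_1\sharp K_2}=\Delta_{K_1}\Delta_{K_2}$, again up to units normalised by $\Delta(1)=1$. (Alternatively, the boundary connect sum $\Sigma_1\natural\Sigma_2$ — formed along a $2$--disk of $K_i$ inside $S^4$ — is a Seifert hypersurface for $K$ with block-diagonal Seifert matrices $V_\pm^{K_1}\oplus V_\pm^{K_2}$, since all mixed Seifert linking numbers vanish; work over $\Qt$ via Levine's rational presentation if some $H_1(\Sigma_i)$ has torsion, which is harmless as $\Delta_K(1)=1$ makes $\Delta_K$ primitive.)

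\textbf{Expected main difficulty.} Both arguments are routine once the setup is right; the care lies in the setup. For the mirror one must verify that reversing the orientation of $S^4$ reverses the preferred generator of $H_1$ of the exterior, so that the module is conjugated by $t\mapsto t^{-1}$ rather than left unchanged. For the connected sum the crucial observation is that, although $U$ is \emph{unknotted}, its meridian is a genuine meridian of $K$, so that $\tilde W$ is contractible rather than a countable disjoint union of copies of $D^2\times S^1$; this is exactly what makes Mayer--Vietoris split off the direct sum. Throughout, one should track the $\pm t^k$ indeterminacy so that the stated equalities hold on the nose rather than merely up to units.
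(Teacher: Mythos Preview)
Your argument is correct. Note that the paper itself does not prove this lemma at all: it is stated as a well-known fact and used immediately to deduce that $K\sharp\mir K$ has the factorization property. Your proof --- conjugation of the deck action by an orientation-reversing diffeomorphism for the mirror, and Mayer--Vietoris on the infinite cyclic cover (with $\tilde W$ contractible) for the connected sum --- is the standard one, and the Seifert-matrix alternatives you sketch in parentheses are also valid and in fact closer in spirit to the computations the paper carries out elsewhere via Proposition~\ref{propLevine}.
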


The next result shows that not any A--ribbon 2--knot with the factorization property is a connected sum of an A--ribbon 2--knot with its 
mirror image, via the exemple of the spun of the ribbon knot $6_1$.
\begin{proposition}
 Let $K$ be the spun of the knot $6_1$. Then there is no ribbon 2--knot $J$ such that $K$ is isotopic to $J\sharp\mir{J}$.
\end{proposition}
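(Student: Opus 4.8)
The strategy is to compute the Alexander polynomial of $K$, the spun of $6_1$, and then show no connected sum $J\sharp\mir J$ with $J$ a ribbon $2$--knot can realize it — or more precisely, that the \emph{decomposition} $\Delta_K=f(t)f(t^{-1})$ forced by the Fox--Milnor factorization is incompatible with what a connected sum $J\sharp\mir J$ would give. The knot $6_1$ has Alexander polynomial $\Delta_{6_1}(t)=2t-5+2t^{-1}=(2t-1)(t^{-1}-... )$; in fact $\Delta_{6_1}(t)\doteq (2-t)(2-t^{-1})$ up to units, i.e.\ $f(t)=2-t$ (this is the classical Fox--Milnor factorization witnessing that $6_1$ is ribbon). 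Since spinning preserves the Alexander polynomial, $\Delta_K(t)\doteq (2-t)(2-t^{-1})$. Now suppose $K\cong J\sharp\mir J$. By the Lemma, $\Delta_{J\sharp\mir J}(t)=\Delta_J(t)\Delta_J(t^{-1})$. So we would need $\Delta_J(t)\Delta_J(t^{-1})\doteq (2-t)(2-t^{-1})$.

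\textbf{Key steps.} First I would pin down $\Delta_K$ precisely, including the normalization $\Delta_K(1)=1$ (so one takes $\Delta_K(t)=(2-t)(2-t^{-1})=-t+5-... $, checking the value at $1$ is $1$). Second, I would analyze the equation $\Delta_J(t)\Delta_J(t^{-1})\doteq(2-t)(2-t^{-1})$ in $\Zt$. Over $\Zt$, the element $2-t$ is irreducible (it is linear with content $1$), and $2-t$, $2-t^{-1}=t^{-1}(2t-1)$ are non-associate irreducibles. Unique factorization in the UFD $\Zt$ then forces, up to units, either $\Delta_J(t)\doteq 1$ (so $\Delta_J(t^{-1})\doteq(2-t)(2-t^{-1})$, impossible since then $\Delta_J$ would be a unit) or $\Delta_J(t)\doteq (2-t)$ or $\Delta_J(t)\doteq(2-t^{-1})$ (the two mixed possibilities, each absorbing one factor). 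So $\Delta_J(t)\doteq 2-t$ up to units and taking $t\mapsto t^{-1}$ — concretely $\Delta_J(t)\in\{\pm t^a(2-t),\ \pm t^a(2t-1)\}$. Third, I would invoke the normalization $\Delta_J(1)=1$ for a $2$--knot: evaluating $2-t$ at $t=1$ gives $1$, and $2t-1$ at $t=1$ gives $1$, so both survive — meaning the Alexander polynomial alone does \emph{not} obstruct. Here is where the argument must use more than $\Delta$: I would bring in the constraint coming from Proposition~\ref{propLevine} / Corollary~\ref{corZtorsion} and the structure of ribbon $2$--knots, OR — more likely the intended route — use the \emph{Shinohara--Sumners / Levine} symmetry together with a \emph{second} Alexander-type invariant. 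Concretely, I would compute the full Alexander \emph{module} of $K$: for the spun knot, $\Al(K)\cong\Al(\bar\kappa)=\Zt/(2-t)$ (cyclic, since $6_1$ has cyclic Alexander module $\Zt/(\Delta_{6_1})$... actually $\Delta_{6_1}$ is not squarefree-free of the needed form — one must check $6_1$'s Alexander module is $\Zt/(2t^2-5t+2)\cong\Zt/(t-2)\oplus\Zt/(2t-1)$ by CRT since $\gcd(t-2,2t-1)$ generates the unit ideal... it generates $(3)$, not the unit ideal, so the module is $\Zt/(2t^2-5t+2)$, \emph{not} a direct sum). Then $\Al(J\sharp\mir J)\cong\Al(J)\oplus\Al(\mir J)=\Al(J)\oplus\overline{\Al(J)}$, a module admitting a nontrivial ``conjugation-symmetric'' direct-sum splitting, and I would show $\Zt/(2t^2-5t+2)$ admits no such splitting.

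\textbf{Main obstacle.} The hard part is exactly this last step: ruling out that $\Al(K)=\Zt/(2t^2-5t+2)$ is isomorphic to some $M\oplus\bar M$. One shows $2t^2-5t+2=(t-2)(2t-1)$ is a product of two coprime-\emph{over-$\Q$} but not coprime-\emph{over-$\Z$} factors, so $\Q\otimes\Al(K)\cong\Qt/(t-2)\oplus\Qt/(2t-1)$ and $\Qt/(2t-1)\cong\overline{\Qt/(t-2)}$, so rationally there \emph{is} a symmetric splitting — hence the obstruction must be integral. The key point is that $\Zt/(2t^2-5t+2)$ is an \emph{indecomposable} $\Zt$--module (it is cyclic and $2t^2-5t+2$ is not a product of two coprime elements of $\Zt$, since the factors $t-2,\ 2t-1$ generate the ideal $(3,t-2)\neq(1)$), so it cannot be written as $M\oplus\bar M$ with both summands nonzero; and it cannot equal $\bar M$ with $M$ nonzero... wait, $\Al(J)\oplus\overline{\Al(J)}$ with $\Al(J)=0$ would force $\Al(K)=0$, false. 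Hence no ribbon (indeed no) $2$--knot $J$ has $\Al(J\sharp\mir J)\cong\Al(K)$, so $K\not\cong J\sharp\mir J$. I expect the write-up to require care in: (i) correctly identifying $\Al(6_1)$ and hence $\Al(K)$ via the spin construction and Proposition~\ref{propLevine}; (ii) the elementary but delicate commutative-algebra lemma that $\Zt/(f g)$ with $f,g$ non-coprime in $\Zt$ is indecomposable; and (iii) observing that $\overline{(\cdot)}$ is an additive involution on $\Zt$--modules so $\Al(J\sharp\mir J)$ is always conjugation-symmetric with a visible summand splitting, which $\Al(K)$ lacks.
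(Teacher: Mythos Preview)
Your argument is correct and reaches the conclusion, but by a genuinely different route than the paper's. Both arguments rest on the same arithmetic fact---that the irreducible factors $2t-1$ and $t-2$ of $\Delta_{6_1}$ are not comaximal in $\Zt$ (their ideal is $(3,t-2)\neq\Zt$)---but exploit it differently. The paper argues via elementary ideals: assuming $J$ is ribbon, one gets a \emph{square} presentation matrix $V(t)$ for $\Al(J)$, hence a block-diagonal presentation $\mathrm{diag}(V(t),V(t^{-1}))$ for $\Al(K)$; every $(2n-1)$--minor of this block matrix is divisible by $\det V(t)=2t-1$ or by $\det V(t^{-1})=2-t$, so the second elementary ideal of $K$ would lie in $(2t-1,2-t)=(3,t-2)$, contradicting the fact that $K$ (like $6_1$, a two-bridge knot) has second elementary ideal equal to $\Zt$. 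Your argument instead uses the module directly: $\Al(K)\cong\Al(6_1)=\Zt/((2t-1)(t-2))$ is cyclic, and since $(2t-1)+(t-2)\neq\Zt$ the quotient ring has connected spectrum and hence no nontrivial idempotents, so $\Al(K)$ is indecomposable; but $\Al(J\sharp\mir J)\cong\Al(J)\oplus\overline{\Al(J)}$ would be a nontrivial decomposition.

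Two remarks on what each approach buys. Your indecomposability argument never uses that $J$ is ribbon---it rules out \emph{any} $2$--knot $J$ with $K\cong J\sharp\mir J$---whereas the paper's proof invokes the ribbon hypothesis to obtain a square presentation matrix for $\Al(J)$. On the other hand, your argument requires knowing that $\Al(6_1)$ is cyclic (equivalently, that its endomorphism ring is the quotient ring), which you flag but do not justify; this is where the two-bridge property of $6_1$ enters, and you should state it explicitly. The paper's argument only needs the elementary ideals of $K$, which are read off directly from any square Seifert presentation.
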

\begin{proof}
 The 2--knot $K$, as the knot $6_1$, has its first elementary ideal principal and generated by its Alexander polynomial, namely $(2t-1)(2-t)$, 
 and has $\Zt$ as a second elementary ideal. 
 
 Assume $K$ is isotopic to $J\sharp\mir{J}$ for some ribbon 2--knot $J$. 
 Up to exchanging $J$ and $\mir{J}$, we have $\Delta_J(t)=(2t-1)$ and $\Delta_{\mir{J}}(t)=(2-t)$. 
 Let $V(t)$ be a square presentation matrix of the integral Alexander module of $J$, of size $n$. Then $V(t^{-1})$ 
 is a presentation matrix of $\Al(\mir{J})$ and $W(t)=\begin{pmatrix} V(t) & 0 \\ 0 & V(t^{-1}) \end{pmatrix}$ is a presentation matrix of $\Al(K)$. 
 The second elementary ideal of $K$ is generated by the minors of size $2n-1$ of $W(t)$. These minors have the following forms:
 $$\begin{pmatrix} V(t) & 0 \\ 0 & \star \end{pmatrix} \quad \begin{pmatrix} \star & 0 \\ 0 & V(t^{-1}) \end{pmatrix} \quad 
   \begin{pmatrix} \star & 0_n \\ 0 & \star \end{pmatrix} \quad \begin{pmatrix} \star & 0 \\ 0_n & \star \end{pmatrix},$$
 where $0_n$ is the trivial square matrix of size $n$. Hence these minors are mutiples of $2t-1$ or $2-t$. It follows that evaluation 
 at $t=-1$ sends the second elementary ideal of $K$ onto an ideal of $\Z$ contained in $3\Z$, which is a contradiction since this ideal is the whole $\Zt$.
\end{proof}

\newcommand{\lft}[1]{\draw[pattern = horizontal lines] #1;}
\newcommand{\up}[1]{\draw[color=white,fill=white] #1; \draw[pattern = north east lines] #1;}
\newcommand{\rght}[1]{\draw[color=white,fill=white] #1; \draw[pattern = horizontal lines] #1;}

\begin{figure}[htb] 
\begin{center}
\begin{tikzpicture} [scale=0.18]
 \begin{scope}
  \lft{(0,0) -- (0,10) -- (4,10) -- (4,0) -- (0,0)}
  \up{(2.5,-2.5) -- (7.5,2.5) -- (7.5,12.5) -- (2.5,7.5) -- (2.5,-2.5)}
  \rght{(6,0) -- (6,10) -- (10,10) -- (10,0) -- (6,0)}
  \draw (5,-5) node {$t=0$};
 \end{scope}
 \begin{scope} [xshift=15cm]
  \draw (4,10) -- (4,0);
  \draw (6,0) -- (6,10);
  \draw (5,-5) node {$0<t<1$};
 \end{scope}
 \begin{scope} [xshift=30cm]
  \lft{(6,0) -- (6,10) -- (4,10) -- (4,0) -- (6,0)}
  \draw (5,-5) node {$t=1$};
 \end{scope}
 \begin{scope} [xshift=45cm]
  \lft{(6,0) -- (6,10) -- (4,10) -- (4,0) -- (6,0)}
  \draw (5,-5) node {$t=3$};
 \end{scope}
 \begin{scope} [xshift=60cm]
  \draw (4,10) -- (4,0);
  \draw (6,0) -- (6,10);
  \draw (5,-5) node {$3<t<4$};
 \end{scope}
 \begin{scope} [xshift=75cm]
  \lft{(0,0) -- (0,10) -- (4,10) -- (4,0) -- (0,0)}
  \up{(2.5,-2.5) -- (7.5,2.5) -- (7.5,12.5) -- (2.5,7.5) -- (2.5,-2.5)}
  \rght{(6,0) -- (6,10) -- (10,10) -- (10,0) -- (6,0)}
  \draw (5,-5) node {$t=4$};
 \end{scope}
\end{tikzpicture}
\end{center}
\caption{The 2--knots $J$ and $\mir{J}$.} \label{figconnectsum}
\end{figure}
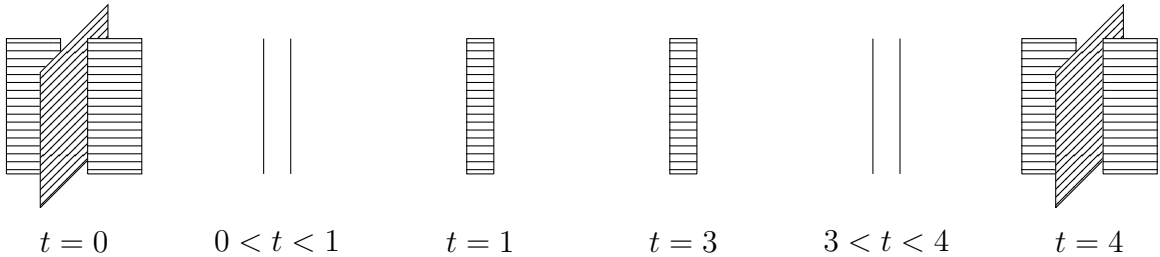

\begin{figure}[htb] 
\begin{center}
\begin{tikzpicture} [scale=0.18]
 \begin{scope}
  \lft{(0,0) -- (0,10) -- (4,10) -- (4,0) -- (0,0)}
  \up{(2.5,-2.5) -- (7.5,2.5) -- (7.5,12.5) -- (2.5,7.5) -- (2.5,-2.5)}
  \rght{(6,0) -- (6,10) -- (10,10) -- (10,0) -- (6,0)}
  \draw (5,-5) node {$0\leq t<1$};
 \end{scope}
 \begin{scope} [xshift=22cm]
  \lft{(0,0) -- (0,10) -- (5,10) -- (5,0) -- (0,0)}
  \up{(2.5,-2.5) -- (7.5,2.5) -- (7.5,12.5) -- (2.5,7.5) -- (2.5,-2.5)}
  \rght{(5,0) -- (5,10) -- (10,10) -- (10,0) -- (5,0)}
  \draw (5,-5) node {$1\leq t\leq3$};
 \end{scope}
 \begin{scope} [xshift=44cm]
  \lft{(0,0) -- (0,10) -- (4,10) -- (4,0) -- (0,0)}
  \up{(2.5,-2.5) -- (7.5,2.5) -- (7.5,12.5) -- (2.5,7.5) -- (2.5,-2.5)}
  \rght{(6,0) -- (6,10) -- (10,10) -- (10,0) -- (6,0)}
  \draw (5,-5) node {$3<t\leq4$};
 \end{scope}
\end{tikzpicture}
\end{center}
\caption{The immersed 3--ball.} \label{fig3ball}
\end{figure}
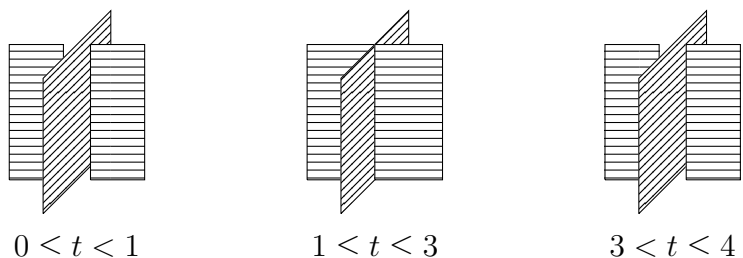

When $J$ is a ribbon 2--knot, it is clear that $K=J\sharp\mir{J}$ is also ribbon and it follows that it is A--ribbon. Anyway, it is interesting to note that there is a natural construction of an A--ribbon 3--ball for $K$ associated with the decomposition of $K$ as the connected sum $J\sharp\mir{J}$. 
It was proved by Yanagawa \cite{Yan} that a 2--knot is ribbon if and only if it is simply knotted, {\em i.e.} 
if it has a projection on a 3--dimensional hyperplane whose singular set is made of simple closed curves of double points. 
Consider such a projection of $J$ on $\R^3_0$ in $\R^4=\sqcup_{t\in\R}\R^3_t$. Re-construct $J$ from this projection by pushing the over-crossing leaf at each curve of double points as shown in Figure \ref{figconnectsum}, so that $J\subset\sqcup_{0\leq t\leq 1}\R^3_t$. Draw $\mir{J}$ by symmetry with respect to $\R^3_2$. Join each point of $J$ to the corresponding point of $\mir{J}$ by a line segment, see Figure \ref{fig3ball}. The union of all these line segments 
is an immersed $S^2\times S^1$; remove a tube from it, disjoint from the singularities, so that the obtained immersed 3--ball $\Brib$ is bounded by $J\sharp\mir{J}$. 
The singularities of $\Brib$ are easily seen to be ribbon annuli.

\bibliographystyle{smfalpha.bst}
\bibliography{biblio.bib}

\end{document}